\newtheorem{thm}{Theorem}[section]
\newtheorem{lem}[thm]{Lemma}
\newtheorem{prop}[thm]{Proposition}
\theoremstyle{definition}
\newtheorem{defn}{Definition}
\theoremstyle{remark}
\newtheorem{rem}{Remark}
\numberwithin{equation}{section}
\numberwithin{rem}{section}
\numberwithin{defn}{section}
\begin{document}
\title[]
{Convergence results of a nested decentralized gradient method for non-strongly convex problems} 
\author{Woocheol Choi}
\address{Department of Mathematics, Sungkyunkwan University, Suwon 440-746, Republic of Korea}
\email{choiwc@skku.edu}

\author{Doheon Kim}
\address{Department of Applied Mathematics, Hanyang University, Hanyangdaehak-ro 55, Sangnok-gu, Ansan, Gyeonggi-do 15588, Republic of Korea}
\email{doheonkim@hanyang.ac.kr}

\author{Seok-Bae Yun}
\address{Department of Mathematics, Sungkyunkwan University, Suwon 440-746, Republic of Korea}
\email{sbyun01@skku.edu}

\subjclass[2010]{Primary 90C25, 68Q25}

\keywords{Distributed Gradient methods, NEAR-DGD$^{+}$, Quasi-strong convexity}

\maketitle

\begin{abstract}
We are concerned with the convergence of NEAR-DGD$^+$ (Nested Exact Alternating Recursion Distributed Gradient Descent) method introduced to solve the distributed optimization problems. 
Under the assumption of the strong convexity of local objective functions and the Lipschitz continuity of their gradients, the linear convergence is established in \cite{BBKW - Near DGD}. In this paper, we investigate the convergence property of NEAR-DGD$^+$ in the absence of strong convexity. More precisely, we establish the convergence results in the following two cases: (1) When only the convexity is assumed on the objective function. (2) When the objective function is represented as a composite function of a strongly convex function and a rank deficient matrix, which falls into the class of  convex and quasi-strongly convex functions. Numerical results are provided to support the convergence results. 
\end{abstract}

\section{Introduction}
Recent years have seen a growing interest in developing algorithms to optimize a system in which several networked agents cooperate to minimize the aggregate cost function:
\begin{align}\label{1.1}
\min_{x\in\mathbb{R}^p} f(x):= \frac{1}{n} \sum_{i=1}^nf_i(x) ,
\end{align}
which often goes by the name of the distributed optimization in the literature.
Here $f_i:\mathbb{R}^p\rightarrow\mathbb{R}$ is a local objective function (or cost function of agent $i$) and $n$ is the number of agents.
Such formulation is relevant in various situations where the resource has to be optimally allocated. Examples include multi-agent system \cite{BCM, CYRC}, wireless sensor networks \cite{QT, BKP, SKKM}, and  machine learning problems \cite{BCN, FCG, RB, YYWY}, to name a few.

The system  $(\ref{1.1})$ is often studied as an equivalent decentralized optimization problem:
\begin{align*}\label{concen form}
\begin{split}
\min_{{\bf x}\in\mathbb{R}^{np}} F({\bf x}) \mbox{ under the constraint }
~ \big(W\otimes I_p\big){\bf x}={\bf x},
\end{split}
\end{align*}
where $F$ denotes the aggregate cost function in (\ref{1.1}):
\[
F({\bf x}) =\sum_{i=1}^nf_i(x_i).
\]
Here $I_p$ is the $p\times p$ identity matrix, ${\bf x}$ denotes an $np$ dimensional column vector made by stacking up $x_i\in\mathbb{R}^p$ $(i=1,\cdots,n)$:
 
 $${\bf x}= (x_{1}^{\top}, x_{2}^{\top}, \cdots, x_{n}^{\top})^{\top},$$
  and the consensus matrix $W$ is the  $n\times n$ matrix that contains the information on the connectivity of the underlying network:
\begin{equation}\label{stack}
W= \begin{pmatrix} w_{11} & w_{12} & \cdots & w_{1n} \\ w_{21} & w_{22} & \cdots &w_{2n} \\ \vdots &\vdots &\ddots &\vdots \\ w_{n1} & w_{n2} &\cdots &w_{nn}
\end{pmatrix}. 
\end{equation}
\textcolor{black}{For a $k\times l$ matrix $A$ and an $m\times n$ matrix $B$, the Kronecker product $A\otimes B$ is defined as the following  $km\times ln$ matrix: 
\begin{align*}
A\otimes B=\left(\begin{array}{ccc}
A_{11}B&\cdots&A_{1l}B\\
\vdots& \ddots&\vdots\\
A_{k1}B&\cdots&A_{kl}B
\end{array}\right)
\end{align*}
where $A_{ij}B$ denotes an $m\times n$ block matrix.}	
Throughout this paper, we assume the following properties for the consensus matrix:
{\color{black}
\begin{itemize}
\item $W$ is a doubly-stochastic $n\times n$ matrix.
\item The directed graph associated to $W$ is strongly connected, and has a self-loop at every vertex.
\end{itemize}
In Lemma \ref{newlemma}, we will show that the graph associated to $W^\top W$ is connected, and so the eigenvalue $1$ of $W^\top W$ is simple and all other eigenvalues have modulus strictly less than 1. We let $\beta\in[0,1)$ be the square root of the second largest eigenvalue of $W^\top W$, which is equal to the spectral norm of $W-\frac{1}{n}1_n1_n^\top$, where $1_n$ is a column vector in $\mathbb{R}^n$ whose entries are all $1$.} We let $f^*$ be the optimal objective value of \eqref{1.1} and denote by ${X}^*$ the set of optimal solutions to \eqref{1.1} which is assumed to be nonempty, i.e., 
\begin{equation*}
{X}^* = \{x \in \mathbb{R}^p ~:~ f(x) =f^*\}.
\end{equation*}
Several approaches have been suggested to solve the distributed optimization problem \eqref{1.1}. Since the
literature is huge, we only review works that are directly related to the decentralized method.
The most common category of algorithms to solve the aforementioned distributed optimization (\ref{1.1}) is
the distributed gradient descent (DGD), which is  a gradient descent method with a weighted averaging operation using the consensus matrix in the non-gradient part. After the introduction of DGD in \cite{Ned DGD} (See also \cite{YLY}), numerous extensions and modifications of this method have been suggested.  For example, extension to the case where noise or randomness is present \cite{LO - random,MB - random,R Ned V - stochastic,S Ned - stochastic}, extension to optimizations on the network represented by an undirect graph \cite{Ned Push sum}, constrained problems \cite{Ned O P - constraint}, and quantization effect \cite{Ned O - Quantized} to name a few. Nice overview of DGD can be found in \cite{GRBDG review,Nedic review,Many review}. For a comparison of the decentralized method and centralized method, we refer to \cite{LZZ comparison}.

A common problem shared by the  family of the distributed gradient methods
is that,  when the step size is fixed \cite{SLWY - Extra}, the iteration does not lead to the exact optimizing solution, but only to a neighborhood of it. The exact optimizing solution can be reached only when a proper diminishing of the step size is accompanied.
Several attempts to overcome this drawback and to obtain the convergence to the exact optimizing solution with fixed time step have been suggested. 
In \cite{SLWY - Extra}, a multi-step decentralized gradient descent is suggested by subtracting the
DGD of one step from the next.
The gradient tracking method was introduced in 
\cite{P Nedic - tracking,LS-NEXT,NOS - Tracking } where an auxiliary variable is introduced to track the difference of the gradients in a consensus setting.  
In NEAR-DGD$^+$ method  \cite{BBKW - Near DGD,BBW NEAR DGD convergence}, increasingly aggregate consensus process is imposed on each step to guarantee the convergence to the exact optimizing solution without reducing the step size. In \cite{BBKW - Near DGD},  the linear convergence was obtained for NEAR-DGD$^+$ when each local cost function $f_i$ is strongly convex and smooth. In the current work, we extend the convergence estimate to the cases of convex functions when such strong convexity is missing.  Specifically, we establish the convergence results \textcolor{black}{in the following two cases: (1) When only the convexity is assumed on the objective function $f$. (2) When the objective function $f$ is represented as a composite function of a strongly convex function and a rank deficient matrix, which falls into the class of  convex and quasi-strongly convex functions.}


 This paper is organized as follows: In Section \ref{sec-2}, we state the main results of this paper and prove two preliminary lemmas concerning the property of cost functions. In Section \ref{sec-3}, we prove a couple of preliminary results used for the convergence analysis. Section \ref{sec-4} is devoted to analyzing the convergence properties of NEAR-DGD$^{+}$ in the case of convex cost functions. In Section \ref{sec-5}, we apply the argument used in Section \ref{sec-4} to obtain a convergence result of NEAR-DGD$^{+}$ for the quasi-strongly convex case. In Section \ref{sec-6}, we develop another argument to establish more sharp convergence results in the quasi-strongly convex case. Section \ref{sec-7} provides numerical tests supporting  the validity of our result.
 
\medskip

Before ending this section, we state the following notation used in the paper.\newline

\noindent {\bf Notation.} $\|\cdot\|$ denotes the standard $\ell^2$-norm in the Euclidean space, or the operator norm induced by the $\ell^2$-norm, depending on the context.

\section{Main results}\label{sec-2}

In this section, we describe the detail of NEAD DGD$^{+}$ and state the main results of this paper. 

\begin{defn}For given $\alpha \geq 0$, a function $\mathbf{f}: \mathbb{R}^p \rightarrow \mathbb{R}$ is called $\alpha$-strongly convex (convex if $\alpha =0$) if
\begin{equation*}
\mathbf{f}(y) \geq \mathbf{f}(x) + \langle y-x, \nabla \mathbf{f}(x) \rangle+ \frac{\alpha}{2} \|y-x\|^2\quad \forall~x,y \in \mathbb{R}^p.
\end{equation*}
\end{defn}
\begin{defn}We  say that a function $\mathbf{f}: \mathbb{R}^p \rightarrow \mathbb{R}$ is called $L$-smooth for given   $L >0$ if $f$ has Lipschitz continuous gradient with constant $L$, i.e., 
\begin{equation*}
\|\nabla \mathbf{f}(x) - \nabla \mathbf{f}(y)\| \leq L \|x-y\|\quad \forall~x,y \in\mathbb{R}^p.
\end{equation*}
\end{defn}

\subsection{NEAR-DGD$^+$}
The Nested Exact Alternating Recursion Distributed Gradient Descent (NEAR-DGD$^{+}$) was introduced in \cite{BBKW - Near DGD,BBW NEAR DGD convergence} to solve (\ref{1.1}): 
\begin{equation}\label{NEAR DGD}
{\bf {\bf x}}_{k+1} = \big(W^{t(k)}\otimes I_p\big)\big({\bf x}_k - \mu \nabla F({\bf x}_k)\big)\quad \forall~ k \geq0,
\end{equation} 
where ${\bf x}_k= (x_{1,k}^{\top}, x_{2,k}^{\top}, \cdots, x_{n,k}^{\top})^{\top}$ and $\mu >0$. Here  $\nabla F (\mathbf{x}_k)\in \mathbb{R}^{np}$ denotes
\begin{equation*}
\nabla F({\bf x}_k)=\left(
\nabla f_1(x_{1,k})^{\top},\cdots
, \nabla f_n(x_{n,k})^{\top}
\right)^{\top}
\end{equation*}
and the exponent $t(k) \in \mathbb{N}$ indicates the number of consensus step. 

 To state convergence results, we introduce the averaging of any $np$-dimensional vector $\bold x$:
\begin{align}
\bar{\bf x}=\frac{1}{n}\big(1_n1^{\top}_n\!\!\otimes I_p\big){\bold x}
\end{align}
where $1_n$ is a column vector in $\mathbb{R}^n$ whose entries are all $1$. In particular, we  have
\begin{align}\label{11t}
\bar{{\bf x}}_k=\frac{1}{n}\big(1_n1^{\top}_n\!\!\otimes I_p\big){\bf x}_k~~\mbox{  and  }~~ 
\overline{\nabla  F} (\mathbf{x}_k)=\frac{1}{n}\big(1_n1^{\top}_n\!\!\otimes I_p\big)\nabla F (\mathbf{x}_k).
\end{align}
We note that
$$
\bar{{\bf x}}_k=1_n\otimes\left(\frac{1}{n}\sum_{i=1}^nx_{i,k}\right)=1_n\otimes \bar x_k= (\bar x_{k}^{\top},\bar x_{k}^{\top},\cdots,\bar x_{k}^{\top})^{\top}.
$$
Using these notation for averaged quantities, we derive the following averaged version of (\ref{NEAR DGD}) which plays an important role as a useful intermediate step in the convergence analysis:
\begin{equation}\label{average}
\bar{{\bf x}}_{k+1} = \bar{{\bf x}}_k - \mu \overline{\nabla F}({\bf x}_k),
\end{equation}
where we used the property that $1_n^\top W = 1_n^\top$ coming from the column-stochasticity of the consensus matrix $W$.

Under the assumptions of strong convexity and Lipschitz continuity of the gradient of each function $f_i$, the linear convergence
of (\ref{NEAR DGD}) with $t(k) =k$ is established in \cite{BBKW - Near DGD}, i.e., there exist constants $C>0$, $D>0$ and \textcolor{black}{$\rho\in(0,1)$} such that 
\begin{equation*}
\|{\bf x}_{k} - \bar{{\bf x}}_k \|\leq \beta^{k} D\quad \textrm{and}\quad \|{\bf\bar{x}}_k - {\bf x}_*\| \leq C \rho^k
\end{equation*}
for all $1 \leq i \leq n$ and $k \in \mathbb{N}\cup\{0\}$. 
\textcolor{black}{We recall that $\beta\in[0,1)$ is the spectral norm of $W-\frac{1}{n}1_n1_n^\top$.}

In the current work, we are interested in the case where the strong convexity of the cost functions are missing. \textcolor{black}{Precisely, we obtain the convergence results when either (1)  the cost function is convex or (2) it is given in the form of the composite function of a strongly convex function and a linear function. We note that the objective function of the latter is convex and quasi-strongly convex.}

\subsection{$\bullet$ Main result I: Convergence for convex functions} 
Our first results concern the case where only the convexity (not the strong convexity) 
 is assumed on the cost function $f$. Throughout this paper, $\|\cdot\|$ denotes the usual Euclidean norm. \textcolor{black}{We also recall that $\beta\in[0,1)$ is the spectral norm of $W-\frac{1}{n}1_n1_n^\top$.}
\begin{thm}\label{Main1} Suppose that the cost function $f$ is convex and each local cost $f_i$ is $L_i$-smooth 
for some $L_i>0$. Assume that $\mu \in (0,2/L]$ with $L = \max_{1 \leq i \leq n} L_i$ and the sequence $\{t(k)\}_{k \in \mathbb{N}\cup\{0\}}$ satisfies $\sum_{k=0}^\infty \beta^{t(k)} <\infty$. Then, for any minimizer $x_*\in X^*$ , we have
\begin{equation}\label{eq-1-1}
\|{\bar{\bf x}}_k -{\bf x}_*\|=O(1)\quad \textrm{and}\quad\|\bar{\bold x}_{k+1} -\bold x_{k+1}\| = O(\beta^{t(k)}),\quad k\to\infty,
\end{equation}
where 
$\bold x_*:=1_n\otimes x_* =(x_*^{\top},x_*^{\top},\cdots,x_*^{\top})^{\top}$. 	
\end{thm}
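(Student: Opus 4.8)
The plan is to split \eqref{eq-1-1} into two coupled assertions and treat them simultaneously: the decay $\|\bar{\mathbf{x}}_{k+1}-\mathbf{x}_{k+1}\|=O(\beta^{t(k)})$ of the consensus error, and the boundedness $\|\bar{\mathbf{x}}_k-\mathbf{x}_*\|=O(1)$ of the averaged iterate. Write $J=\frac1n 1_n1_n^\top$ and set $a_k:=\|\bar x_k-x_*\|$, $e_k:=\|\mathbf{x}_k-\bar{\mathbf{x}}_k\|$, so that $\|\bar{\mathbf{x}}_k-\mathbf{x}_*\|=\sqrt n\,a_k$. Subtracting \eqref{average} from \eqref{NEAR DGD} and using $JW^{t(k)}=J$ gives the identity
\begin{equation*}
\mathbf{x}_{k+1}-\bar{\mathbf{x}}_{k+1}=\big((W^{t(k)}-J)\otimes I_p\big)\big(\mathbf{x}_k-\mu\nabla F(\mathbf{x}_k)\big).
\end{equation*}
Since $W$ is doubly stochastic, $WJ=JW=J=J^2$, whence $(W-J)^m=W^m-J$ for every $m\ge1$; combined with $\|W-J\|=\beta$ this yields $\|W^{t(k)}-J\|\le\beta^{t(k)}$ and therefore $e_{k+1}\le\beta^{t(k)}\|\mathbf{x}_k-\mu\nabla F(\mathbf{x}_k)\|$. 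Finally, $L_i$-smoothness together with $\sum_i\nabla f_i(x_*)=n\nabla f(x_*)=0$ gives an affine bound $\|\mathbf{x}_k-\mu\nabla F(\mathbf{x}_k)\|\le c_0+c_1 a_k+c_2 e_k$. Thus the \emph{entire} theorem reduces to proving that $a_k$ and $e_k$ stay bounded (in fact $e_k\to0$): then the right-hand side above is $O(1)$ and the second estimate in \eqref{eq-1-1} follows, while boundedness of $a_k$ is the first estimate.

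Next I would derive a descent-type recursion for $a_k^2$. The averaged dynamics read $\bar x_{k+1}=\bar x_k-\mu g_k$ with $g_k=\frac1n\sum_{i=1}^n\nabla f_i(x_{i,k})$, so $a_{k+1}^2=a_k^2-2\mu\langle g_k,\bar x_k-x_*\rangle+\mu^2\|g_k\|^2$. Put $\delta_k:=g_k-\nabla f(\bar x_k)$; $L_i$-smoothness and Cauchy--Schwarz give $\|\delta_k\|\le\frac{L}{\sqrt n}e_k$. For the principal part, co-coercivity of the $L$-smooth convex function $f$ together with $\nabla f(x_*)=0$ yields $\langle\nabla f(\bar x_k),\bar x_k-x_*\rangle\ge\frac1L\|\nabla f(\bar x_k)\|^2$, so that
\begin{equation*}
-2\mu\langle\nabla f(\bar x_k),\bar x_k-x_*\rangle+\mu^2\|\nabla f(\bar x_k)\|^2\le-\mu\Big(\tfrac{2}{L}-\mu\Big)\|\nabla f(\bar x_k)\|^2\le0
\end{equation*}
exactly in the stated range $\mu\le2/L$. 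The remaining $\delta_k$-contributions are controlled by $\|\delta_k\|\,a_k$ and $\|\delta_k\|^2$, so that after discarding the nonpositive $\|\nabla f(\bar x_k)\|^2$ term one arrives at an inequality of the form
\begin{equation*}
a_{k+1}^2\le a_k^2+C_1 e_k a_k+C_2 e_k^2 .
\end{equation*}

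The main obstacle is the circular coupling between the two recursions: $e_{k+1}\le\beta^{t(k)}(c_0+c_1a_k+c_2e_k)$ requires $a_k$ bounded, whereas the descent recursion lets $a_k$ grow with the consensus errors, which in turn depend on past $a_j$. I would break this as follows. Completing the square and using $\sqrt{x^2+y^2}\le x+y$ converts the \emph{multiplicative} bound into an \emph{additive} one,
\begin{equation*}
a_{k+1}\le a_k+C_4 e_k,\qquad\text{hence}\qquad a_k\le a_0+C_4\sum_{j=0}^{k-1}e_j .
\end{equation*}
This is the decisive point: it makes $a_k$ grow at most \emph{linearly} in $S:=\sum_{j\ge0}e_j$, avoiding the exponential feedback one would get from the cross term otherwise. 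Substituting $a_k\le a_0+C_4 S$ into $e_{k+1}\le\beta^{t(k)}(c_0+c_1a_k+c_2e_k)$ and summing over $k$, I would estimate $\sum_k\beta^{t(k)}a_k$ and $\sum_k\beta^{t(k)}e_k$ by splitting each sum at an index $K$ so large that the tail $\sum_{k\ge K}\beta^{t(k)}$ (finite by hypothesis) is small enough to absorb the $S$-dependent parts into the left-hand side, the finitely many initial terms being bounded since each $a_k,e_k$ is finite. This forces $S<\infty$, whence $a_k=O(1)$ (the first estimate, up to the factor $\sqrt n$) and $e_k\to0$. Feeding boundedness of $a_k$ and $e_k$ back into $e_{k+1}\le\beta^{t(k)}(c_0+c_1a_k+c_2e_k)$ then gives $\|\bar{\mathbf{x}}_{k+1}-\mathbf{x}_{k+1}\|=O(\beta^{t(k)})$. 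The genuinely delicate steps are the co-coercivity computation that pins the threshold to $\mu\le2/L$ and, above all, the simultaneous closure of the $a_k$--$e_k$ coupling, where the summability $\sum_k\beta^{t(k)}<\infty$ is used in an essential way.
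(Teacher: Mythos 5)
Your proof is correct, and up to the final bootstrapping step it coincides with the paper's: the same splitting into $a_k=\|\bar x_k-x_*\|$ and $e_k=\|\mathbf{x}_k-\bar{\mathbf{x}}_k\|$, the same contraction bound $\|W^{t(k)}-\tfrac1n 1_n1_n^\top\|\le\beta^{t(k)}$ obtained from $(W-\tfrac1n 1_n1_n^\top)^m=W^m-\tfrac1n 1_n1_n^\top$, the same co-coercivity inequality (Lemma \ref{lem-2-1} with $\alpha=0$) that pins the admissible range to $\mu\in(0,2/L]$, and the same pair of coupled recursions --- your $a_{k+1}\le a_k+C_4e_k$ and $e_{k+1}\le\beta^{t(k)}(c_0+c_1a_k+c_2e_k)$ are precisely the paper's \eqref{line for A} and \eqref{estimate A} up to constants (two cosmetic differences: the paper reaches the first directly by the triangle inequality and nonexpansiveness of $x\mapsto x-\mu\nabla f(x)$, without passing through squared quantities, and it applies the contraction to $\mathbf{x}_k-\bar{\mathbf{x}}_k$ rather than to $\mathbf{x}_k$, which avoids the harmless extra $\|\mathbf{x}_*\|$ in your $c_0$). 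Where you genuinely depart is the closure of the $a$--$e$ coupling. The paper packages the recursions as a vector inequality $X_{k+1}\le M_kX_k+Y_k$ with $M_k=\bigl(\begin{smallmatrix}1&r\\ \alpha_k&\alpha_k\end{smallmatrix}\bigr)$, $\alpha_k=O(\beta^{t(k)})$, and proves in Proposition \ref{prop-2-2} that all products $M_b\cdots M_a$ are uniformly bounded, because in a weighted $\ell^1$-norm one has $\|M_k\|_{\mathcal L}\le 1+c\alpha_k$ and $\prod_k(1+c\alpha_k)\le\exp\bigl(c\sum_k\alpha_k\bigr)<\infty$. You instead integrate the $a$-recursion to $a_k\le a_0+C_4\sum_{j<k}e_j$, substitute into the $e$-recursion, and absorb the tail; your sketch does close rigorously --- for $k\le N$ bound $e_k\le\sum_{j\le N}e_j$ and $a_k\le a_0+C_4\sum_{j\le N}e_j$, sum over $K\le k\le N$, and choose $K$ so that $(c_1C_4+c_2)\sum_{k\ge K}\beta^{t(k)}\le\tfrac12$, which bounds $\sum_{j\le N+1}e_j$ uniformly in $N$. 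Both mechanisms exploit the identical structure (additive feedback $e\to a$, feedback $a\to e$ damped by a summable factor), so the proofs are equally elementary; the paper's formulation has the advantage of being a reusable black box, invoked again verbatim in the proof of Theorem \ref{Main2}, while your absorption argument is self-contained, dispenses with the matrix formalism, and yields $\sum_k e_k<\infty$ en route --- information which both proofs recover at the end anyway, since feeding the uniform bounds back into \eqref{estimate A} gives $e_{k+1}=O(\beta^{t(k)})$ and $\sum_k\beta^{t(k)}<\infty$ by hypothesis.
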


\begin{thm}\label{thm-1-2} Suppose that the cost function $f$ is convex and each local cost $f_i$ is $L_i$-smooth for some $L_i>0$.  Assume that $\mu \in (0, 1/L]$ with $L = \max_{1 \leq i \leq n} L_i$ and the sequence $\{t(k)\}_{k \in \mathbb{N}\cup\{0\}}$ satisfies $\sum_{k=0}^{\infty}\beta^{t(k)}<\infty$. 
	 Then   we have
	 \begin{equation*}
	 f\Bigg(\frac{1}{T} \sum_{k=0}^{T-1}  \bar{x}_k \Bigg) - f^*= O\Big( \frac{1}{T}\Big)
	 \end{equation*}
	 for $T \in \mathbb{N}$.
\end{thm}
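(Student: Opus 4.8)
The plan is to run a standard ergodic (Cesàro-averaged) convergence argument for convex gradient descent on the \emph{averaged} iterate $\bar x_k$, treating the deviation of the local points $x_{i,k}$ from their mean $\bar x_k$ as a perturbation controlled by Theorem~\ref{Main1}. Writing $g_k := \frac1n\sum_{i=1}^n \nabla f_i(x_{i,k})$, the averaged recursion \eqref{average} reads $\bar x_{k+1} = \bar x_k - \mu g_k$ in $\mathbb{R}^p$, so expanding the square gives
\begin{equation*}
\|\bar x_{k+1} - x_*\|^2 = \|\bar x_k - x_*\|^2 - 2\mu \langle g_k, \bar x_k - x_* \rangle + \mu^2 \|g_k\|^2 .
\end{equation*}
The whole argument reduces to converting $\langle g_k, \bar x_k - x_*\rangle$ into $f(\bar x_k) - f^*$ up to summable errors.

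First I would lower bound the cross term. Splitting $\bar x_k - x_* = (\bar x_k - x_{i,k}) + (x_{i,k} - x_*)$ and applying, for each $i$, the refined convexity/smoothness inequality $f_i(x_*) \ge f_i(x_{i,k}) + \langle \nabla f_i(x_{i,k}), x_* - x_{i,k}\rangle + \frac{1}{2L_i}\|\nabla f_i(x_{i,k}) - \nabla f_i(x_*)\|^2$ valid for convex $L_i$-smooth functions, together with $\frac1n\sum_i \nabla f_i(x_*) = \nabla f(x_*) = 0$, yields a lower bound on $\langle g_k, \bar x_k - x_*\rangle$ in terms of $\frac1n\sum_i\big(f_i(x_{i,k}) - f_i(x_*)\big)$, the consensus cross terms $\frac1n\sum_i\langle \nabla f_i(x_{i,k}), \bar x_k - x_{i,k}\rangle$, and a \emph{positive} quadratic $\frac{1}{2n}\sum_i \frac{1}{L_i}\|\nabla f_i(x_{i,k}) - \nabla f_i(x_*)\|^2$. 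The descent lemma applied at each agent then turns $f_i(x_{i,k}) + \langle \nabla f_i(x_{i,k}), \bar x_k - x_{i,k}\rangle$ into $f_i(\bar x_k)$ minus $\frac{L_i}{2}\|\bar x_k - x_{i,k}\|^2$, and summing over $i$ recovers $f(\bar x_k) - f^*$ up to the consensus-error term $\frac{1}{2n}\sum_i L_i \|\bar x_k - x_{i,k}\|^2$.

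The key step — and the reason the hypothesis sharpens to $\mu \le 1/L$ — is absorbing the $\mu^2\|g_k\|^2$ term. Since $g_k = \frac1n\sum_i\big(\nabla f_i(x_{i,k}) - \nabla f_i(x_*)\big)$, Jensen's inequality gives $\|g_k\|^2 \le \frac1n\sum_i \|\nabla f_i(x_{i,k}) - \nabla f_i(x_*)\|^2$, so the contribution $+\mu^2\|g_k\|^2$ combines with $-2\mu$ times the positive quadratic above; termwise the coefficient is $\mu^2 - \mu/L_i \le 0$ precisely when $\mu \le 1/L_i$, hence when $\mu \le 1/L$. Thus this quadratic can be discarded, leaving the clean per-step inequality
\begin{equation*}
f(\bar x_k) - f^* \le \frac{1}{2\mu}\big(\|\bar x_k - x_*\|^2 - \|\bar x_{k+1} - x_*\|^2\big) + \frac{1}{2n}\sum_{i=1}^n L_i \|\bar x_k - x_{i,k}\|^2 .
\end{equation*}

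Finally I would sum this from $k=0$ to $T-1$. The first term telescopes to at most $\frac{1}{2\mu}\|\bar x_0 - x_*\|^2 = O(1)$. For the second, I bound $\|\bar x_k - x_{i,k}\| \le \|\bar{\mathbf x}_k - \mathbf x_k\|$ and invoke the consensus estimate $\|\bar{\mathbf x}_k - \mathbf x_k\| = O(\beta^{t(k-1)})$ from Theorem~\ref{Main1}; since $\beta \in [0,1)$ and $t(k)\ge 1$ give $\beta^{2t(k-1)} \le \beta^{t(k-1)}$, the hypothesis $\sum_k \beta^{t(k)} < \infty$ makes $\sum_k \|\bar x_k - x_{i,k}\|^2$ finite, so the second term is also $O(1)$ uniformly in $T$. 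Hence $\sum_{k=0}^{T-1}(f(\bar x_k) - f^*) = O(1)$, and Jensen's inequality for the convex $f$ gives $f\big(\frac1T\sum_{k=0}^{T-1}\bar x_k\big) - f^* \le \frac1T \sum_{k=0}^{T-1}(f(\bar x_k) - f^*) = O(1/T)$. The only genuine obstacle is the $\|g_k\|^2$ term: a naive estimate makes it merely $O(1)$ and non-summable, so the co-coercivity inequality together with the sharper step-size restriction $\mu \le 1/L$ is essential to close the argument.
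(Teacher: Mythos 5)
There is a genuine gap at the very first step of your cross-term estimate: the inequality $f_i(x_*) \ge f_i(x_{i,k}) + \langle \nabla f_i(x_{i,k}), x_* - x_{i,k}\rangle + \frac{1}{2L_i}\|\nabla f_i(x_{i,k}) - \nabla f_i(x_*)\|^2$ is valid only when $f_i$ itself is convex, but the theorem assumes only that the \emph{aggregate} $f = \frac{1}{n}\sum_{i=1}^n f_i$ is convex; the local costs $f_i$ are merely $L_i$-smooth and may be non-convex. For instance, with $n=2$, $f_1(x) = -x^2$ and $f_2(x) = 3x^2$, the aggregate $f(x) = x^2$ satisfies all hypotheses of the theorem, yet your inequality for $f_1$ at $x_{i,k} = 0$ reads $-y^2 \ge y^2$, false for every $y \neq 0$ (even the plain first-order convexity inequality for $f_1$ fails). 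Since your entire conversion of $\langle g_k, \bar x_k - x_*\rangle$ into $f(\bar x_k) - f^*$, as well as the absorption of $\mu^2\|g_k\|^2$ by the positive quadratic $\frac{1}{2n}\sum_i \frac{1}{L_i}\|\nabla f_i(x_{i,k}) - \nabla f_i(x_*)\|^2$, rests on this per-agent inequality, the argument does not close under the stated hypotheses.

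The repair is essentially the paper's route: do not apply convexity to the individual $f_i$ at all. Write $g_k = \nabla f(\bar x_k) + \big(\frac{1}{n}\sum_i \nabla f_i(x_{i,k}) - \frac{1}{n}\sum_i\nabla f_i(\bar x_k)\big)$, bound the bracketed discrepancy by $\frac{L}{\sqrt n}\|\bar{\mathbf{x}}_k - \mathbf{x}_k\|$ using only $L_i$-smoothness, and then apply convexity together with the co-coercivity inequality $\langle \bar x_k - x_*, \nabla f(\bar x_k) - \nabla f(x_*)\rangle \ge \frac{1}{L}\|\nabla f(\bar x_k) - \nabla f(x_*)\|^2$ (Lemma \ref{lem-2-1} with $\alpha = 0$) to the aggregate $f$ at the averaged point $\bar x_k$; the restriction $\mu \le 1/L$ then kills the resulting $(\mu^2 - \mu/L)\|\nabla f(\bar x_k)\|^2$ term, exactly in the spirit of your absorption step. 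This route produces a cross term $\frac{2\mu L}{\sqrt n}\|\bar x_k - x_*\|\,\|\bar{\mathbf{x}}_k - \mathbf{x}_k\|$, which is why the paper also invokes the uniform bound $\|\bar x_k - x_*\| = O(1)$ from Theorem \ref{Main1} before summing; combined with $\|\bar{\mathbf{x}}_k - \mathbf{x}_k\| = O(\beta^{t(k-1)})$ and $\sum_k\beta^{t(k)} < \infty$, all error terms are summable, and the telescoping-plus-Jensen ending you describe goes through verbatim. (Had the theorem assumed each $f_i$ convex, your argument would be correct and in fact slightly leaner, since the exact expansion of $\|\bar x_{k+1} - x_*\|^2$ dispenses with the need for the $O(1)$ bound on $\|\bar x_k - x_*\|$; but that is a strictly stronger hypothesis than the one being used.)
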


In order to obtain the above results, we will find a recursive inequality for the sequence of vectors consisting of $\|\bar{\bf x}_k -{\bf x}_*\|$ and $\|\bar{\bf x}_k - {\bf x}_k \|$. The recursive inequality at step $k$ involves a multiplication with a $2\times 2$ matrix depending on $\beta^{t(k)}$ followed by an addition with a two-dimensional vector depending on $\beta^{t(k)}$. By obtaining a sharp bound on multiplications of those matrices, we first prove the uniform boundedness of the sequence. We then make use of the bound with the convexity of the cost function to derive the convergence result.

\begin{rem}
We note that obtaining estimates \eqref{eq-1-1} is equivalent to obtaining similar estimates for $\|\bar{x}_k -x_*\|$  and $\|x_{i,k} - \bar{x}_k\|$ for $1 \leq i \leq n$ in view of the following relations:
\begin{equation*}
\|\bar{\bf x}_k -{\bf x}_*\| = \sqrt{n} \|\bar{x}_k -x_*\|\quad \textrm{and}\quad \|\bar{\bf x}_k - {\bf x}_k\| = \Big( \sum_{j=1}^n \|\bar{x}_k - x_{j,k}\|^2 \Big)^{1/2}.
\end{equation*}
\end{rem}
\noindent$\bullet$ {\bf Main result 2: Convergence for quasi-strongly convex functions.}
Our second result is concerned with the case where the quasi-strong convexity is assumed on the cost function in place of the strong convexity. 
Especially, we shall consider the case when the quasi-convexity
arises from the composition of a strongly convex function and a rank-deficient matrix, which
appears ubiquitously in optimization problems or machine learning tasks. The following is the definition of the quasi-strongly convex function:

\begin{defn}[\cite{NNG}]
Continuously differentiable function $f$ is called quasi-strongly convex
on set X if there exists a constant  $\kappa_f > 0$ such that 
\[
f^*:=\min_{y\in X}f(y)\geq f(x)+\langle \nabla f(x),[x]-x\rangle+\frac{\kappa_f}{2}\|x-[x]\|^2
\]
for all $x\in X$. Here $[x]$ denotes the projection of $x$ onto the optimal set 
\[
X^*:=\{x\in X: f(x)=f^*\}.
\]
\end{defn}
The projection $[x]$ is well
defined if $X^*$ is closed and convex (see From Theorem 1.5.5 in \cite{FP}). The following composite objective function constitutes a relevant example of quasi-strongly convex functions \cite{MJ}:

\begin{equation}\label{eq-2-21}
f(x)=\textcolor{black}{\sum_{i=1}^n (h_i^\top  x  -y_i)^2 = \|H x-y\|^2,}
\end{equation}
where $x  \in \mathbb{R}^p$, $h_i \in \mathbb{R}^p$, and $y_i \in \mathbb{R}$ and 
\begin{equation*}
H = (h_1  , \cdots, h_n  )^\top\quad \textrm{and}\quad y= (y_1, \cdots, y_n)^\top.
\end{equation*}
Note that the above function  $f$ is strongly convex if $H$ is full ranked, while
it is only quasi-strongly convex if $H$ is not of full rank. In general, if $g: \mathbb{R}^m \rightarrow \mathbb{R}$ is an $\alpha$-strongly convex and $L$-smooth function and $H \in \mathbb{R}^{m \times p}$, then the function $f(x) = g(Hx)$ is quasi-strongly convex.
Now we state our main result for quasi-strongly convex functions.
\begin{thm}\label{Main2} Suppose that  each local cost $f_i$ is $L_i$-smooth for some $L_i>0$ and  the aggregate cost function $f:\mathbb{R}^p\rightarrow \mathbb{R}$ takes the form $f(x) = g(Hx)$ for an $\alpha$-strongly convex and $\mathbf{L}$-smooth function $g:\mathbb{R}^m\rightarrow \mathbb{R}$ with $H \in \mathbb{R}^{m\times p}$.  Assume further that $f$ has a minimizer and
\begin{equation}\label{eq-2-20}
D := \sup_{x  \in X^*} \Big( \sum_{j=1}^n \|\nabla f_j (x )\|^2\Big)^{1/2} < \infty.
\end{equation}
If the stepsize $\eta>0$ and the sequence $\{t(k)\}_{k \in \mathbb{N}\cup \{0\}}$ satisfy
	\begin{equation*}
	\mu  \leq \frac{2C_H}{\mathbf{L}+\alpha}
	\quad \textrm{and}\quad \sum_{k=0}^{\infty} \beta^{t(k)} <\infty,
	\end{equation*}
	  then the discrepancy
	$\|\bold{x}_{k}-{ [{\bar{\bf x}}_k]}\|$ 
    vanishes as $k\rightarrow\infty$, where we adopted the convention $[{\bar{\bf x}}_k]:=1_n\otimes[{\bar{ x}}_k]$. Moreover we have

    \begin{equation*}
\big\|\bold{ x}_{T+1} - [\bar{\bf x}_{T+1}]\big\|
\leq  C\Big( \max_{ \textcolor{black}{\lceil T/2\rceil \leq j \leq T}}\beta^{t(j)}\Big)+ C\Big((1-C_2 \mu)^{T/2}\Big),
\end{equation*}  
where $C_2 = \frac{2\mathbf{L}\alpha c_H}{\mathbf{L}+\alpha}$ and constant $C>0$ is independent of $T$.
 Here $c_H$ denotes a coefficient in the Hoffman inequality  and $C_H = 1/\|H\|_2^2$ with $\|H\|_2: = \sup_{x\in \mathbb{R}^p \setminus \{0\}} \|Hx\|_2 / \|x\|_2$. \textcolor{black}{Also, $\lceil z\rceil$ is the least integer greater than or equal to z.}

	\end{thm}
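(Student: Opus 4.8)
The plan is to reduce the theorem to the control of the two scalar sequences $a_k := \|\bar{\mathbf{x}}_k - [\bar{\mathbf{x}}_k]\|$ and $b_k := \|\mathbf{x}_k - \bar{\mathbf{x}}_k\|$, since the triangle inequality gives $\|\mathbf{x}_k - [\bar{\mathbf{x}}_k]\| \le a_k + b_k$, so it suffices to show both decay at the stated rate. First I would establish a recursion for the consensus error $b_k$. Subtracting \eqref{average} from \eqref{NEAR DGD} and using that $(W^{t(k)}\otimes I_p)$ fixes the consensus vector $\bar{\mathbf{x}}_k$, one obtains $\mathbf{x}_{k+1} - \bar{\mathbf{x}}_{k+1} = \big((W^{t(k)} - \tfrac1n 1_n 1_n^\top)\otimes I_p\big)\big((\mathbf{x}_k - \bar{\mathbf{x}}_k) - \mu\nabla F(\mathbf{x}_k)\big)$. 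Since $(W - \tfrac1n 1_n1_n^\top)^t = W^t - \tfrac1n 1_n1_n^\top$ by double stochasticity, the operator norm of the leading factor is at most $\beta^{t(k)}$. Bounding $\|\nabla F(\mathbf{x}_k)\| \le L\|\mathbf{x}_k - [\bar{\mathbf{x}}_k]\| + D$ through $L$-smoothness of the $f_i$ and the definition \eqref{eq-2-20} of $D$ (note $\nabla F([\bar{\mathbf{x}}_k])$ is evaluated at an optimal point), I arrive at $b_{k+1} \le \beta^{t(k)}\big((1+\mu L)b_k + \mu L a_k + \mu D\big)$.

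Next I would derive a contraction for $a_k$. Writing the averaged iterate as a perturbed gradient step $\bar{x}_{k+1} = \bar{x}_k - \mu\nabla f(\bar{x}_k) - \mu e_k$, with $e_k = \tfrac1n\sum_i(\nabla f_i(x_{i,k}) - \nabla f_i(\bar{x}_k))$ satisfying $\|e_k\| \le \tfrac{L}{\sqrt n}b_k$, I would expand $\|\bar x_k - \mu\nabla f(\bar x_k) - [\bar x_k]\|^2$ and exploit the structure $f = g\circ H$ by passing to $z_k = H\bar{x}_k$, on which the co-coercivity inequality for the $\alpha$-strongly convex, $\mathbf{L}$-smooth $g$ applies (with $\nabla g(H[\bar x_k]) = 0$). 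The stepsize bound $\mu \le 2C_H/(\mathbf{L}+\alpha)$ is exactly the condition making the gradient-norm contributions $\tfrac{2\mu}{\mathbf{L}+\alpha}\|\nabla g(z_k)\|^2$ and $\tfrac{\mu^2}{C_H}\|\nabla g(z_k)\|^2$ cancel, after using $\|\nabla f(\bar x_k)\|^2 \le \tfrac{1}{C_H}\|\nabla g(z_k)\|^2$. Converting the surviving term $\|H(\bar x_k - [\bar x_k])\|^2$ to $\|\bar x_k - [\bar x_k]\|^2$ via the Hoffman inequality $\|Hv\|^2 \ge c_H\|v\|^2$ on $(\ker H)^\perp$ produces the factor $C_2 = \tfrac{2\mathbf{L}\alpha c_H}{\mathbf{L}+\alpha}$; using that $[\bar x_{k+1}]$ is the nearest optimal point and absorbing the perturbation then gives $a_{k+1} \le \sqrt{1-C_2\mu}\,a_k + \mu L\, b_k$.

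I would then analyze the coupled system $(a_k,b_k)$. Boundedness comes from a Lyapunov combination $u_k = a_k + \lambda b_k$ with $\lambda \approx 2\mu L$: since $\sqrt{1-C_2\mu}<1$ and $\beta^{t(k)}\to 0$, for all large $k$ one has $u_{k+1} \le u_k + \lambda\mu D\,\beta^{t(k)}$, so $\sum_k\beta^{t(k)}<\infty$ forces $\sup_k u_k <\infty$, the finite initial segment being absorbed into the constant. With $a_k,b_k$ uniformly bounded by some $B$, the consensus recursion upgrades to $b_{k+1}\le K\beta^{t(k)}$ with $K = (1+2\mu L)B + \mu D$; hence $b_k\to 0$ and $\sum_k b_k<\infty$. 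Feeding a summable, vanishing forcing into the strictly contracting recursion for $a_k$ yields $a_k\to 0$, which proves $\|\mathbf{x}_k - [\bar{\mathbf{x}}_k]\|\to 0$.

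Finally, for the quantitative estimate I would unroll $a_{T+1}\le (1-C_2\mu)^{(T+1)/2}a_0 + \mu L\sum_{k=0}^{T}(1-C_2\mu)^{(T-k)/2}b_k$ and split the sum at $\lceil T/2\rceil$. The tail $k\ge\lceil T/2\rceil$ is bounded by $\tfrac{1}{1-\sqrt{1-C_2\mu}}\max_{k\ge\lceil T/2\rceil}b_k \le CK\max_{\lceil T/2\rceil\le j\le T}\beta^{t(j)}$, producing the first stated term, while the head together with the homogeneous part contributes the geometric factor $(1-C_2\mu)^{T/2}$ via $\sum_k b_k<\infty$; the remaining piece $b_{T+1}\le K\beta^{t(T)}$ is already of the first type. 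I expect the main obstacle to be precisely this telescoping estimate: because the early consensus errors $b_k$ are only summable (not geometrically small), one must track carefully how they propagate through the contraction so that their aggregate splits cleanly into the recent-consensus term and the geometric term. Establishing uniform boundedness from the very first iteration, rather than only asymptotically, is the other delicate point, since the $b$-recursion is contracting only once $\beta^{t(k)}$ has become small.
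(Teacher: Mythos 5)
Your proposal is correct, and its two core recursions coincide with the paper's: your contraction $a_{k+1}\le\sqrt{1-C_2\mu}\,a_k+\mu L\,b_k$ is exactly \eqref{eq-4-20}, which the paper obtains from the coercivity estimate of Lemma \ref{lem-4-1} (Lemma \ref{lem-2-1} applied to $g$, the Hoffman inequality, the operator norm of $H$, and the projection property of $[\,\cdot\,]$), and your consensus recursion is \eqref{eq-4-21}. Where you genuinely differ is in the analysis of the coupled system \eqref{eq-4-1}: the paper factors $q=\sqrt{1-C_2\mu}$ out of the $2\times 2$ matrix, bounds products of the resulting matrices by $Cq^{b-a+1}$ via Proposition \ref{prop-2-2}, unrolls the vector recursion, and splits the convolution $\sum_j q^{T-j}\beta^{t(j)}$ at $\lceil T/2\rceil$; you instead first prove uniform boundedness through the scalar Lyapunov quantity $a_k+2\mu L\,b_k$ (valid once $\beta^{t(k)}$ is small, the finite initial segment being absorbed into the constant), and then decouple: the upgraded bound $b_{k+1}\le K\beta^{t(k)}$ is fed into the scalar contraction for $a_k$, and that sum is split at $T/2$. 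This decoupled strategy is essentially what the paper itself does in Section \ref{sec-6} (Lemma \ref{lem-5-1} and Theorems \ref{thm-1-10}, \ref{thm-1-11}); a pleasant feature of your version is that the asymptotic Lyapunov argument requires no stepsize restriction beyond $\mu\le 2C_H/(\mathbf{L}+\alpha)$, whereas the paper's inductive boundedness lemma imposes an extra one (it must also cover non-summable $t(k)$). The matrix-product route is shorter and handles both components at once; your route isolates a clean $O(\beta^{t(k)})$ bound on the consensus error and is the style of argument that extends to constant $t(k)$. Two minor repairs are needed in your write-up: (i) the claim $\nabla g(H[\bar x_k])=0$ is unjustified --- only $H^\top\nabla g(H[\bar x_k])=\nabla f([\bar x_k])=0$ holds --- so you should carry the differences $\nabla g(H\bar x_k)-\nabla g(H[\bar x_k])$ through the computation, exactly as Lemma \ref{lem-4-1} does (nothing else changes); (ii) since you pass through $b_k\le K\beta^{t(k-1)}$, your tail term comes out as a maximum over $\lceil T/2\rceil-1\le j\le T-1$, so shift the splitting index by one to recover the stated range $\lceil T/2\rceil\le j\le T$.
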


\begin{rem}The assumption \eqref{eq-2-20} holds if $H$ is the identity matrix since $X^*$ is a point set. In the Lemma \ref{lem-2-13} below, we prove that the assumption \eqref{eq-2-20} is true for natural cases containing the example \eqref{eq-2-21}.
\end{rem} 
\begin{rem}\label{rem-2-8}  Hoffman coefficient  $c_H$ \cite{Hoff,NNG} is defined by the constant that satisfies the following inequality  
		\begin{equation*}
	\|Hx -H[x]\|^2 \geq c_H \|x-[x]\|^2
	\end{equation*}
	for all $x\in \mathbb{R}^m$. More general definitions are available, but this is sufficient for our purpose. The above inequality implies that $c_H \leq 1/C_H =\|H\|_2^2$ since $\|Hx- H[x]\| \leq \|H\|_2 \|x-[x]\|$. 
	\end{rem}
	
	\begin{rem}
 In the above theorem, we note that $f$ is a convex function either. Therefore the optimal set $X^*$ is closed and convex, and so the projection $[x]$ onto $X^*$ is well defined. We remark that a quasi-strongly convex function need not be a convex function in general. 
\end{rem}
In order to prove the above result, we establish a  coercivity estimate for \textcolor{black}{the quasi-strongly convex functions} of the composite form (see Lemma \ref{lem-4-1}). With the help of the coercivity estimate, we will derive a recursive inequality for the sequence of vectors  consisting of $\|\bar{\bf x}_k -[ \bar{\bf x}_k]\|$ and $\|\bar{\bf x}_k - {\bf x}_k\|$. \textcolor{black}{Then, as in the proof of Theorems \ref{Main1} and \ref{thm-1-2}, we find a sharp bound on multiplications of the matrices in the recursive inequality to derive the convergence result.}


It will be an interesting problem to extend this convergence result to general quasi-strongly convex functions. We refer to the recent paper \cite{NNG} where the linear convergence was obtained for gradient descent methods applied to quasi-strongly convex functions.

The results of above theorems are proved by analyzing the growth of the matrix norm appearing in the sequential inequality of the vectors  $\|\bar{\bf x}_k -[ \bar{\bf x}_k]\|$ and $\|\bar{\bf x}_k - {\bf x}_k\|$. We also provide another approach for analyzing the sequential inequality of the vectors, which gives an improved estimate under weak assumptions on $\{t(k)\}$.  In the approach, we first prove that the sequences are uniformly bounded and then use this fact to analyze two types of inequalities for the vectors, separately. We first find a sharp bound of $\|\bar{\bf x}_k -[ \bar{\bf x}_k]\|$, and then use it to obtain a sharp bound of $\|\bar{\bf x}_k - {\bf x}_k\|$.

By the procedure stated above, we aim to obtain refined results for the convergence  of \eqref{NEAR DGD} when the cost $f$ is of the form $f(x) = g(Hx)$ as in Theorem \ref{Main2}.  In the following two theorems, we use the notations
\begin{equation*}
A_0 = \|\bar{\bf x}_0 -[ \bar{\bf x}_0]\|\quad \textrm{and} \quad B_0 =\|\bar{\bf x}_0 - {\bf x}_0\|.
\end{equation*}
In addition, we use the constant $D>0$ given in \eqref{eq-2-20} and a constant $R>0$ defined by
\begin{equation*}
R = \max \Big\{ A_0, ~ B_0 / \gamma,~ \frac{\mu D \beta^J}{\gamma-(\mu L + \gamma (1+\mu L))\beta^J}\Big\},
\end{equation*}
where $\gamma = \frac{1- \sqrt{1-C_2 \mu}}{\mu L}$ and $L = \max_{1 \leq i \leq n} L_i$ with $C_2> 0$ and $L_i >0$ defined in Theorem \ref{Main2}. 

 First we obtain the result when the number $t(k)$ of communications at each step is constant for $k$.
\begin{thm}\label{thm-1-10}Suppose that local costs and the aggregate cost function are given as in Theorem \ref{Main2}. Assume that $t(k) =J \in \mathbb{N}$ for all $k \in \mathbb{N}\cup \{0\}$ and  
\begin{equation*}
\mu \leq \min \Big\{\frac{2C_H}{\mathbf{L}+\alpha},~ \frac{C_2}{C_2 +L(1+\sqrt{2})}\cdot \frac{1-\beta^J}{L\beta^J}\Big\}
\end{equation*}
with constant $C_2 >0$ given in Theorem \ref{Main2} and $L = \max_{1\leq i \leq n} L_i$.
Then, for all $k \geq 0$ we have
\begin{equation}\label{eq-5-24}
\|\bar{\bf x}_{k+1} -[\bar{\bf x}_{k+1}]\| \leq (\sqrt{1-C_2 \mu})^{k} (A_0 + \mu L B_0)   +\frac{\mu L \beta^J}{1-\beta^J}\Big( \frac{\mu  ( 2LR +  D)}{1-\sqrt{1-C_2 \mu}} +  { B_0}\Big)
\end{equation}
and
\begin{equation*}
\|\bar{\bf x}_k - {\bf x}_k\| \leq (  \beta^J)^k B_0 + \frac{\mu \beta^J ( 2LR + D)}{1- \beta^J}.
\end{equation*}
\end{thm}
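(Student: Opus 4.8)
The plan is to set up the coupled recursive system for the two error sequences $A_k := \|\bar{\bf x}_k - [\bar{\bf x}_k]\|$ and $B_k := \|\bar{\bf x}_k - {\bf x}_k\|$, exploiting the constant communication count $t(k)=J$ to obtain a clean time-invariant recursion, and then solve the two scalar inequalities in sequence. First I would recall (or re-derive) the one-step consensus contraction: since the deviation from the average is annihilated by $\frac{1}{n}1_n1_n^\top$ and $W^J$ contracts the orthogonal complement by $\beta^J$, the averaged recursion \eqref{average} together with \eqref{NEAR DGD} should yield $B_{k+1} \leq \beta^J B_k + \mu\beta^J \bigl(2LR + D\bigr)$, where the additive term comes from bounding $\|\nabla F({\bf x}_k)\|$ via $L$-smoothness of each $f_i$, the triangle inequality through a point of $X^*$, the definition of $D$ in \eqref{eq-2-20}, and the \emph{a priori} bound $A_k, B_k/\gamma \leq R$ that I must carry as an induction hypothesis. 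Iterating this linear scalar inequality geometrically gives exactly the second claimed bound $B_k \leq (\beta^J)^k B_0 + \frac{\mu\beta^J(2LR+D)}{1-\beta^J}$.

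Next I would derive the recursion for $A_k$. Applying the coercivity estimate for composite quasi-strongly convex functions (Lemma \ref{lem-4-1}) to the averaged iterate, together with the descent inequality for the gradient step with $\mu \leq 2C_H/(\mathbf{L}+\alpha)$, I expect a contraction of the form $A_{k+1} \leq \sqrt{1-C_2\mu}\,A_k + \mu L B_k + (\text{error from using }\nabla F(\mathbf{x}_k)\text{ rather than }\nabla f\text{ at }\bar x_k)$, where the cross term $\mu L B_k$ records the gap between the true averaged gradient $\overline{\nabla F}(\mathbf{x}_k)$ and $\nabla f(\bar x_k)$, controlled by $L$-smoothness and the consensus error $B_k$. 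Substituting the already-established bound on $B_k$ and summing the resulting geometric series in $\sqrt{1-C_2\mu}$ produces the homogeneous term $(\sqrt{1-C_2\mu})^k(A_0 + \mu L B_0)$ and the steady-state term $\frac{\mu L\beta^J}{1-\beta^J}\bigl(\frac{\mu(2LR+D)}{1-\sqrt{1-C_2\mu}} + B_0\bigr)$ in \eqref{eq-5-24}, where the factor $\frac{1}{1-\sqrt{1-C_2\mu}}$ is precisely the sum of the geometric series governing the $A$-recursion.

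The main obstacle, and the step requiring the most care, is closing the induction that justifies the uniform bounds $A_k \leq R$ and $B_k \leq \gamma R$ used to linearize both recursions — the constant $R$ in the statement is defined as the maximum of $A_0$, $B_0/\gamma$, and a third term engineered so that the recursions reproduce the bound. The smallness condition $\mu \leq \frac{C_2}{C_2 + L(1+\sqrt2)}\cdot\frac{1-\beta^J}{L\beta^J}$ is exactly what is needed to guarantee the induction step does not inflate $R$: one must verify that if $A_k \leq R$ and $B_k \leq \gamma R$, then after one step both still hold, which amounts to checking that the coefficient $(\mu L + \gamma(1+\mu L))\beta^J$ appearing in the definition of $R$ is strictly less than $\gamma$ and that the forced steady-state contributions stay below $R$. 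I would carry out this verification first, before unwinding the geometric sums, since it is what makes the linearized recursions legitimate; the remaining estimates are then routine iterations of scalar linear inequalities.
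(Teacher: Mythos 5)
Your proposal is correct and follows essentially the same route as the paper: it establishes the uniform bounds $A_k \leq R$, $B_k \leq \gamma R$ by induction (the paper's Lemma \ref{lem-5-1}, with the stepsize condition ensuring $(\mu L + \gamma(1+\mu L))\beta^J < \gamma$), then linearizes and solves the $B$-recursion $B_{k+1} \leq \beta^J B_k + \mu\beta^J(2LR+D)$ via the geometric-sum lemma (Lemma \ref{lem-5-2}), and finally substitutes the $B_k$ bound into the contraction $A_{k+1} \leq \sqrt{1-C_2\mu}\,A_k + \mu L B_k$ and sums the resulting series, using $A_1 \leq A_0 + \mu L B_0$ to reach \eqref{eq-5-24}. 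No gaps; the ordering of steps and the role of each constant match the paper's argument.
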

Next we elaborate the argument used in the above result to obtain a sharp convergence result of \eqref{NEAR DGD} for general non-decreasing sequence $\{t(k)\}$. In particular, we do not assume that \mbox{$\sum_{k=0}^{\infty}\beta^{t(k)}< \infty$.}
\begin{thm}\label{thm-1-11}Suppose that local costs and the aggregate cost function are given as in Theorem \ref{Main2}.  Assume that $\{t(k)\}$ is non-decreasing and we let $J:= t(0)\in \mathbb{N}$. Suppose that 
\begin{equation*}
\mu \leq \min \Big\{\frac{2C_H}{\mathbf{L}+\alpha},~ \frac{C_2}{C_2 + L(1+\sqrt{2})}\cdot \frac{1-\beta^J}{L\beta^J}\Big\}.
\end{equation*}
Then for $k \geq 2$ we have 
\begin{equation}\label{eq-5-40} 
\begin{split}
&\|\bar{\bf x}_{k+1} -[\bar{\bf x}_{k+1}]\|
\\
   &\quad \leq (\sqrt{1-C_2 \mu})^{k} (A_0 + \mu L B_0) 
\\
&\quad \quad   +\frac{\mu L}{1 - \sqrt{1- C_2\mu}} \bigg[ \mu (2LR +D) \beta^{t(\lfloor k/2 \rfloor)} + B_0 \beta^{(\lfloor k/2\rfloor +1)J}\bigg]
\\
&\quad \quad + \mu L (\sqrt{1-C_2 \mu})^{\lfloor k/2 \rfloor} \bigg[ \mu (2LR+D) \sum_{l=0}^{\lfloor k/2 \rfloor -1} \beta^{t(l)} + B_0 \sum_{l=0}^{\lfloor k/2\rfloor -1} \beta^{(l+1)J}\bigg] 
\end{split}
\end{equation}
and
\begin{equation*}
\|\bar{\bf x}_{k+1} - {\bf x}_{k+1}\| \leq \beta^{(k+1)J} B_0 + \mu (2LR +D)\beta^{t(k)}
\end{equation*}
 Here $\lfloor a \rfloor$   for $a>0$ denotes the largest integer not less than $a$.
\end{thm}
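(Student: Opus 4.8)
The plan is to track the two scalar quantities $A_k:=\|\bar{\mathbf x}_k-[\bar{\mathbf x}_k]\|$ and $B_k:=\|\bar{\mathbf x}_k-\mathbf x_k\|$, to derive a decoupled pair of one-step inequalities for them, and then --- after establishing uniform boundedness --- to integrate the two inequalities on their separate time scales, namely the geometric contraction $\sqrt{1-C_2\mu}$ of the optimality gap and the slow consensus decay governed by $\beta^{t(k)}$. For the optimality gap, the averaged update \eqref{average} reads $\bar x_{k+1}=\bar x_k-\tfrac{\mu}{n}\sum_{i=1}^n\nabla f_i(x_{i,k})$ in $\mathbb R^p$; using that $[\bar x_{k+1}]$ is the nearest optimal point we have $A_{k+1}\le\sqrt n\,\|\bar x_{k+1}-[\bar x_k]\|$, and splitting $\tfrac1n\sum_i\nabla f_i(x_{i,k})=\nabla f(\bar x_k)+e_k$ with $e_k:=\tfrac1n\sum_i(\nabla f_i(x_{i,k})-\nabla f_i(\bar x_k))$, I would apply the coercivity estimate of Lemma \ref{lem-4-1} to the $\nabla f(\bar x_k)$ part. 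Together with the stepsize bound $\mu\le 2C_H/(\mathbf L+\alpha)$, which makes the $\|\nabla f(\bar x_k)\|^2$ coefficient nonpositive, and the bound $\|e_k\|\le (L/\sqrt n)\,B_k$ from $L$-smoothness, this gives
\begin{equation*}
A_{k+1}\le\sqrt{1-C_2\mu}\;A_k+\mu L\,B_k .
\end{equation*}
For the consensus error I would project the full update \eqref{NEAR DGD} by $I-\tfrac1n 1_n1_n^\top\otimes I_p$, which annihilates the agreement direction, and use $\|W^{t(k)}-\tfrac1n 1_n1_n^\top\|\le\beta^{t(k)}$ to obtain
\begin{equation*}
B_{k+1}\le\beta^{t(k)}\big[(1+\mu L)B_k+\mu L\,A_k+\mu D\big],
\end{equation*}
where $D$ is the bound \eqref{eq-2-20} on the stacked gradients over $X^*$.

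Next I would prove the uniform bounds $A_k\le R$ and $B_k\le\gamma R$ for all $k$ by a coupled induction on the two inequalities above. The definition $\gamma=(1-\sqrt{1-C_2\mu})/(\mu L)$ makes the first inequality preserve $A_k\le R$, since then $\sqrt{1-C_2\mu}+\mu L\gamma=1$, while the third entry in the definition of $R$ is precisely the threshold that makes the second inequality preserve $B_k\le\gamma R$; here the stepsize restriction $\mu\le\frac{C_2}{C_2+L(1+\sqrt2)}\cdot\frac{1-\beta^J}{L\beta^J}$ and the inequality $t(k)\ge J$ guarantee that the denominator $\gamma-(\mu L+\gamma(1+\mu L))\beta^J$ is positive. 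With these bounds in force the second inequality simplifies to $B_{k+1}\le\beta^{t(k)}\big(B_k+\mu(2LR+D)\big)$, which I would solve by telescoping: because $\{t(k)\}$ is non-decreasing with $t(0)=J$, one has $\sum_{i=j}^{k}t(i)\ge t(k)+(k-j)J$, so the resulting geometric sum is dominated by a constant multiple of $\beta^{t(k)}$ and yields the consensus estimate $\|\bar{\mathbf x}_{k+1}-\mathbf x_{k+1}\|\le\beta^{(k+1)J}B_0+\mu(2LR+D)\beta^{t(k)}$.

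The final step is to feed this consensus estimate back into the optimality recursion. Writing $q=\sqrt{1-C_2\mu}$ and iterating the first inequality gives
\begin{equation*}
A_{k+1}\le q^{k+1}A_0+\mu L\sum_{j=0}^{k}q^{k-j}B_j ,
\end{equation*}
and the convolution on the right multiplies two sequences that decay in opposite directions: $q^{k-j}$ decays as $j$ decreases, while $B_j\sim\beta^{t(j)}$ decays as $j$ increases. Since we deliberately do not assume $\sum_k\beta^{t(k)}<\infty$, the partial sums of $\{B_j\}$ may grow, so the convolution cannot be bounded by a constant. I would split it at $\lfloor k/2\rfloor$. On the upper half $j\ge\lfloor k/2\rfloor$ the slow factor is already as small as $\beta^{t(\lfloor k/2\rfloor)}$ by monotonicity of $t$ while $\sum_j q^{k-j}\le 1/(1-q)$, which produces the middle line of \eqref{eq-5-40}; on the lower half $j<\lfloor k/2\rfloor$ the fast factor is uniformly at most $q^{\lfloor k/2\rfloor}$, which multiplies the possibly growing partial sums $\sum_{l<\lfloor k/2\rfloor}\beta^{t(l)}$ and $\sum_{l<\lfloor k/2\rfloor}\beta^{(l+1)J}$ of the consensus estimate and produces the last line. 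Collecting the three contributions yields \eqref{eq-5-40}.

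I expect the midpoint split to be the crux of the argument: it is the device that converts the lack of summability of $\{\beta^{t(k)}\}$ into a genuinely decaying bound, by trading the strong contraction $q^{\lfloor k/2\rfloor}$ against the accumulated consensus error on the lower half while exploiting the monotone decay of $t$ on the upper half. Establishing the uniform bounds is the other essential ingredient, since it is what turns the coupled inequalities into the constant source term $\mu(2LR+D)$; the index bookkeeping is routine, and the hypothesis $k\ge2$ only serves to make $\lfloor k/2\rfloor\ge1$ so that the split is non-degenerate.
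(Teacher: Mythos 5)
Your proposal is correct and follows essentially the same route as the paper: the same pair of one-step inequalities for $A_k$ and $B_k$, the same inductive uniform-boundedness argument (the paper's Lemma \ref{lem-5-1}), the same reduction of the consensus recursion to a constant source term $\mu(2LR+D)$, and the same midpoint split of the convolution balancing $(\sqrt{1-C_2\mu})^{\lfloor k/2\rfloor}$ against $\beta^{t(\lfloor k/2\rfloor)}$. The only cosmetic difference is that you unroll the $B$-recursion directly via $\sum_{i=j}^{k}t(i)\ge t(k)+(k-j)J$, whereas the paper first coarsens to constant exponent $J$ (its Lemma \ref{lem-5-2}) and then refines once; both derivations actually produce the consensus bound with an extra factor $1/(1-\beta^J)$ that the theorem statement --- and your final claim --- silently drop.
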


Before ending this section, we prove two lemmas regarding the cost functions in the setting of Theorem  \ref{Main2}.
\begin{lem}\label{lem-2-9} Suppose that  each local cost $f_i$ is $L_i$-smooth for some $L_i>0$ and  the aggregate cost function $f:\mathbb{R}^p\rightarrow \mathbb{R}$ takes the form $f(x) = g(Hx)$ for an $\alpha$-strongly convex and $\mathbf{L}$-smooth function $g:\mathbb{R}^m\rightarrow \mathbb{R}$ with $H \in \mathbb{R}^{m\times p}$. Then $L=\max_{1 \leq i \leq n}L_i$ satisfies the inequality $L \geq \alpha \|H\|_{2}^2$. 
	\end{lem}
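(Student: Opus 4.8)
The plan is to pit the quadratic upper bound coming from the smoothness of the aggregate cost $f$ against the quadratic lower bound coming from the strong convexity of $g$, and then to read off $\alpha\|H\|_2^2\le L$ by comparing the two quadratic forms along a common direction.

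First I would record that $f=\frac1n\sum_{i=1}^n f_i$ is itself $L$-smooth. Since $\nabla f=\frac1n\sum_{i=1}^n\nabla f_i$, its Lipschitz constant is bounded by the average $\frac1n\sum_{i=1}^n L_i$, which is in turn at most $\max_{1\le i\le n}L_i=L$. Consequently $f$ obeys the usual descent-lemma upper bound
$$f(y)\le f(x)+\langle\nabla f(x),y-x\rangle+\frac{L}{2}\|y-x\|^2\qquad\forall\,x,y\in\mathbb{R}^p.$$

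Next I would extract a matching lower bound from the composite structure $f(x)=g(Hx)$. Applying the definition of $\alpha$-strong convexity of $g$ at the pair of points $Hx$ and $Hy$, together with the chain rule $\nabla f(x)=H^\top\nabla g(Hx)$, gives
$$f(y)\ge f(x)+\langle\nabla f(x),y-x\rangle+\frac{\alpha}{2}\|H(y-x)\|^2\qquad\forall\,x,y\in\mathbb{R}^p.$$
Chaining this lower bound with the upper bound above and cancelling the common affine terms $f(x)+\langle\nabla f(x),y-x\rangle$ leaves $\alpha\|H(y-x)\|^2\le L\|y-x\|^2$ for all $x,y$. Setting $z=y-x$, which ranges over all of $\mathbb{R}^p$, and taking the supremum of $\|Hz\|^2/\|z\|^2$ over $z\ne0$ yields $\alpha\|H\|_2^2\le L$, which is exactly the claim.

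I do not expect a serious obstacle, as the argument reduces to a one-line comparison of two quadratic forms evaluated along the same direction. The only steps requiring minor care are justifying that $f$ inherits $L$-smoothness from the $f_i$ through the elementary bound (average $\le$ maximum), and transporting the strong-convexity inequality for $g$ across the linear map $H$ so that the quadratic remainder carries the factor $\|H(y-x)\|^2$ rather than $\|y-x\|^2$; the operator norm $\|H\|_2$ appears precisely because of this $H$-dependence in the lower bound.
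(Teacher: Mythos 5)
Your proposal is correct and follows essentially the same route as the paper's proof: both derive the quadratic lower bound $f(y)\ge f(x)+\langle\nabla f(x),y-x\rangle+\frac{\alpha}{2}\|H(y-x)\|^2$ from the strong convexity of $g$ and the chain rule, pair it with the $L$-smoothness upper bound on $f=\frac1n\sum_i f_i$, and cancel the affine terms to get $\alpha\|H(y-x)\|^2\le L\|y-x\|^2$, whence $L\ge\alpha\|H\|_2^2$. Your extra remark that the Lipschitz constant of $\nabla f$ is bounded by the average of the $L_i$ (hence by the maximum) is just a slightly more explicit version of the paper's one-line justification of the same fact.
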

	\begin{proof}
	The strong convexity of $g$ implies 
	\begin{equation}\label{eq-2-9}
	g(Hy) \geq g(Hx) +\langle  \nabla g(Hx), (Hy-Hx) \rangle + \frac{\alpha}{2} \|Hy - Hx\|^2 \quad \forall~x,y \in \mathbb{R}^m.
	\end{equation}
	Since $f(x) = g(Hx)$ and $\nabla f(x) = H^\top \nabla g(Hx)$, this yields that
	\begin{equation*}
	f(y) \geq f(x) + \langle \nabla f(x), y-x\rangle + \frac{\alpha}{2}\|Hy - Hx\|^2.
	\end{equation*}
	On the other hand, the aggregate cost $f = \frac{1}{n} \sum_{i=1}^n f_i$ is $L$-smooth since each $f_i$ is $L_i$-smooth and $L= \max_{1\leq i \leq n}L_i$. Therefore,
	\begin{equation*}
	f(y) \leq f(x) + \langle \nabla f(x), y-x \rangle + \frac{L}{2} \|y-x\|^2.
	\end{equation*}
	Combining these two inequalities, we find
	\begin{equation}\label{eq-2-6}
	\alpha\|H (x-y)\|^2 \leq L \|x-y\|^2 \quad \forall~x,y \in \mathbb{R}^m,
	\end{equation}
which implies $L \geq \alpha \|H\|_{2}^2$.  
	\end{proof}
	
\begin{lem}\label{lem-2-13} Suppose that  each local cost $f_i$ is $L_i$-smooth for some $L_i>0$ and  the aggregate cost function $f:\mathbb{R}^p\rightarrow \mathbb{R}$ takes the form $f(x) = g(Hx)$ for an $\alpha$-strongly convex and $\mathbf{L}$-smooth function $g:\mathbb{R}^m\rightarrow \mathbb{R}$ with $H \in \mathbb{R}^{m\times p}$.  Assume one of the following statements holds:
\begin{enumerate}
\item The local cost $f_i$ is bounded below for all $1\leq i \leq n$.
\item The local cost $f_i$ satisfies $f_i (x) = f_i (x+v)$ for all  $x \in \mathbb{R}^p$ and $v \in \textrm{Ker}(H)$ and $1 \leq i \leq n$.
\end{enumerate}
Then we have 
\begin{equation}\label{eq-2-13}
 \sup_{x \in X^*} \Big( \sum_{j=1}^n \|\nabla f_j (x )\|^2\Big)^{1/2} < \infty.
\end{equation}
\end{lem}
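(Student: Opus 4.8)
The plan is to handle the two hypotheses separately, relying in both on the facts that $f \equiv f^*$ on $X^*$ and that the strong convexity of $g$ forces $X^*$ to be a single affine translate of $\mathrm{Ker}(H)$. First I would record this structural fact. Since $\nabla f(x) = H^\top \nabla g(Hx)$ and $g$ is $\alpha$-strongly convex, $g$ attains its minimum over the (closed) subspace $\mathrm{Range}(H)$ at a unique point $z_0$; consequently $f(x) = g(Hx) \geq g(z_0)$ with equality exactly when $Hx = z_0$, so that $X^* = \{x \in \mathbb{R}^p : Hx = z_0\}$. In particular any two minimizers differ by an element of $\mathrm{Ker}(H)$, and $\sum_{j=1}^n f_j(x) = n f(x) = n f^*$ for every $x \in X^*$.

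Under hypothesis (2) the conclusion is then essentially immediate. Differentiating the invariance $f_i(x) = f_i(x+v)$ in $x$ gives $\nabla f_i(x) = \nabla f_i(x+v)$ for all $v \in \mathrm{Ker}(H)$, so $\nabla f_i$ is constant along cosets of $\mathrm{Ker}(H)$. Since $X^*$ lies in one such coset, each $\nabla f_j$ takes a single value on $X^*$, and thus the supremum in \eqref{eq-2-13} equals $\big(\sum_{j=1}^n \|\nabla f_j(x_0)\|^2\big)^{1/2}$ for any fixed $x_0 \in X^*$, which is finite.

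Under hypothesis (1) I would invoke the standard descent inequality for a smooth function bounded below: if $\phi$ is $L$-smooth and $\phi \geq \phi_*$, then minimizing the upper quadratic model $\phi(y) \leq \phi(x) + \langle \nabla \phi(x), y-x\rangle + \frac{L}{2}\|y-x\|^2$ at $y = x - \frac{1}{L}\nabla\phi(x)$ yields $\|\nabla\phi(x)\|^2 \leq 2L(\phi(x)-\phi_*)$. Applying this to each $f_i$ with lower bound $m_i$, it remains only to bound $f_i$ from above on $X^*$. Here the identity $\sum_{j=1}^n f_j(x) = n f^*$ on $X^*$ enters: since $f_j \geq m_j$ for $j \neq i$, one gets $f_i(x) \leq n f^* - \sum_{j \neq i} m_j$ uniformly on $X^*$, whence $\|\nabla f_i(x)\|^2 \leq 2L_i\big(n f^* - \sum_{j\neq i} m_j - m_i\big)$ on $X^*$; summing over $i$ gives the finite bound.

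The only genuinely delicate point is the uniform upper bound on $f_i$ over the possibly unbounded set $X^*$ in case (1). This cannot come from compactness, since $X^*$ is an affine subspace of dimension $\dim \mathrm{Ker}(H)$; instead it is supplied by the algebraic constraint $\sum_{j} f_j \equiv n f^*$ together with the lower bounds on the remaining summands. Case (2) requires only the identification of the coset structure of $X^*$, which again rests on the uniqueness of the minimizer of $g$ over $\mathrm{Range}(H)$ guaranteed by strong convexity.
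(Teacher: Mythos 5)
Your proposal is correct and follows essentially the same route as the paper's proof: under hypothesis (1) the descent inequality $\|\nabla f_i(x)\|^2 \le 2L_i(f_i(x)-\inf f_i)$ combined with $\sum_j f_j \equiv n f^*$ on $X^*$ and the lower bounds on the other summands, and under hypothesis (2) the identification of $X^*$ as a single coset of $\mathrm{Ker}(H)$ (the paper gets this from the strong-convexity inequality for $g$, you from the unique minimizer of $g$ over $\mathrm{Range}(H)$ — the same fact) together with the constancy of each $\nabla f_j$ along that coset. No gaps; the extra structural discussion you include is consistent with, and slightly more explicit than, the paper's argument.
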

\begin{proof}
We assume that the first statement holds true. By a standard argument using the smoothness of $f_i$, we have
\begin{equation*}
\min_{y \in \mathbb{R}^p} f_i (y) \leq f_i \Big(x-\frac{1}{L_i} \nabla f_i (x)\Big) \leq f_i (x) - \frac{1}{2L_i} \|\nabla f_i (x)\|^2 \quad \forall~x \in \mathbb{R}^p,
\end{equation*}
which yields
\begin{equation}\label{eq-2-15}
\|\nabla f_i (x) \|^2 \leq 2L_i \Big( f_i (x) - \min_{y \in \mathbb{R}^p} f_i (y)\Big).
\end{equation}
Since $f(x) = \frac{1}{n} \sum_{i=1}^n f_i (x)$  is constant as the optimal value $f^*$ for $x \in X^*$ and each $f_i$ is bounded below, 
\begin{equation*}
\sup_{x \in X^*} f_i (x) < \infty.
\end{equation*}
Combining this with \eqref{eq-2-15} proves \eqref{eq-2-13}.
\

Next we assume the second statement.  Take a point $x_* \in X^*$. Then, using \eqref{eq-2-9} with $f(x) = g(Hx)$ and $\nabla f(x) = H^\top \nabla g(Hx)$, we see that
\begin{equation*}
X^* = \{ x_* + v~:~ v \in \textrm{Ker}(H)\}.
\end{equation*}
For all $v \in \textrm{Ker}(H)$, we have $f_i (x) = f_i (x+v)$ for all $x \in \mathbb{R}^p$. Thus we have $\nabla f_i (x) = \nabla f_i (x+v)$, and so  
\begin{equation*}
\sup_{x \in X^*} \|\nabla f_i (x)\| = \sup_{v \in \textrm{Ker}(H)} \|\nabla f_i (x_* +v)\| = \|\nabla f_i (x_*)\| < \infty.
\end{equation*}
This proves \eqref{eq-2-13}. 
\end{proof}
In the above lemma, we remark that the statement $(2)$ is trivially holds when $H$ is given by an identity matrix since $\textrm{Ker}(H) = \{0\}$. This corresponds to the case that the aggregate cost $f$ is a strongly convex function.


\section{Preliminary results}\label{sec-3}
In this section, we establish a proposition which will play a key role in the proofs of the main theorem. We begin with  recalling the following well-known lemma (see e.g. \cite[Lemma 3.11]{Bu}).
\begin{lem}\label{lem-2-1} Assume that $f: \mathbb{R}^n \rightarrow \mathbb{R}$ is $\alpha$-strongly convex and $L$-smooth. Then, for any $x,y \in \mathbb{R}^n$ we have
\begin{equation*}
\langle \nabla f(x) - \nabla f(y), x-y \rangle \geq \frac{L\alpha}{L+\alpha}\|x-y\|^2 + \frac{1}{\alpha +L} \|\nabla f(x) - \nabla f(y)\|^2.
\end{equation*}
\end{lem}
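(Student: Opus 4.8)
The plan is to reduce the claim to the co-coercivity (Baillon--Haddad) inequality for convex smooth functions by subtracting off the quadratic that carries the strong convexity. Concretely, I would set $g(x) := f(x) - \frac{\alpha}{2}\|x\|^2$, so that $\nabla g(x) = \nabla f(x) - \alpha x$. Then $g$ is convex, since the defining inequality of $\alpha$-strong convexity of $f$ is exactly the convexity of $g$; and $g$ is $(L-\alpha)$-smooth, since $\frac{L}{2}\|\cdot\|^2 - f$ is convex (this being equivalent to $L$-smoothness of $f$) and it coincides with $\frac{L-\alpha}{2}\|\cdot\|^2 - g$, which for the convex function $g$ expresses that $\nabla g$ is $(L-\alpha)$-Lipschitz.

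Next I would invoke the co-coercivity of the gradient of a convex, $M$-smooth function with $M = L - \alpha$, namely
\begin{equation*}
\langle \nabla g(x) - \nabla g(y), x-y\rangle \geq \frac{1}{L-\alpha}\|\nabla g(x) - \nabla g(y)\|^2.
\end{equation*}
If this is not available to cite directly, I would establish it by the standard descent-lemma argument: fixing $x$, the convex function $z \mapsto g(z) - \langle \nabla g(x), z\rangle$ is $(L-\alpha)$-smooth and minimized at $x$, so applying the descent lemma to one gradient step from $y$ and then symmetrizing the roles of $x$ and $y$ yields the displayed bound.

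The conclusion is then pure algebra. Writing $D = \langle \nabla f(x) - \nabla f(y), x-y\rangle$, $G = \|\nabla f(x) - \nabla f(y)\|^2$, and $S = \|x-y\|^2$, and substituting $\nabla g(x) - \nabla g(y) = \nabla f(x) - \nabla f(y) - \alpha(x-y)$ into both sides, the co-coercivity inequality becomes $(L-\alpha)(D - \alpha S) \geq G - 2\alpha D + \alpha^2 S$. Collecting terms gives
\begin{equation*}
(L+\alpha)\,D \geq G + L\alpha\,S,
\end{equation*}
which, upon dividing by $L+\alpha$, is precisely the asserted inequality.

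The only genuinely delicate point is the degenerate case $L = \alpha$, where $g$ has zero-Lipschitz gradient and the division by $L-\alpha$ is illegitimate. In that case $f$ is an exact quadratic $\frac{\alpha}{2}\|x\|^2 + \langle b,x\rangle + c$, so $\nabla f(x) - \nabla f(y) = \alpha(x-y)$ and both sides of the target bound reduce to $\alpha\|x-y\|^2$ with equality; hence the inequality holds trivially and can be disposed of separately (or recovered by letting $\alpha \uparrow L$). Apart from this special case, the reduction and the ensuing algebra are routine, and I expect the only real content to be the co-coercivity lemma itself.
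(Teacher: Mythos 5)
Your proof is correct. The paper itself gives no argument for this lemma: it simply cites \cite[Lemma 3.11]{Bu}, and your proposal is precisely the standard proof behind that citation — subtract $\frac{\alpha}{2}\|\cdot\|^2$ to reduce to co-coercivity of the convex, $(L-\alpha)$-smooth function $g$, then verify the algebra, which checks out: $(L-\alpha)(D-\alpha S) \geq G - 2\alpha D + \alpha^2 S$ indeed rearranges to $(L+\alpha)D \geq G + L\alpha S$. Your separate treatment of the degenerate case $L=\alpha$ (where $f$ is an exact quadratic and the inequality holds with equality) is a point the casual citation glosses over, and it is handled correctly.
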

	In the proofs of the main theorems, we will consider a sequence $\{(A_k, B_k)\}_{k \in \mathbb{N}}$ defined in \eqref{eq-3-10} and derive a recursive estimate for the sequence. Then, to obtain the desired results from the recursive estimate, we will use the following proposition. 

\begin{prop}\label{prop-2-2} 
Let  $\{X_k\}_{k\geq0}$ and $\{Y_k\}_{k\geq0}$ be sequences of non-negative vectors in $\mathbb{R}^2$ and $\{M_k\}_{k\geq0}$ be a sequence of $2\times 2$ matrices. Assume that $\{X_k\}_{k\geq0}$, $\{Y_k\}_{k\geq0}$ and $\{M_k\}_{k\geq0}$ satisfy the following iterative relation:
	\begin{equation*}
	X_{k+1} \leq M_k X_k + Y_k.
	\end{equation*}
	Suppose further that $\{Y_k\}_{k \geq0}$ is a sequence of non-negative vectors such that $\sum_{k=0}^{\infty}\big\| Y_k \big\|< \infty$, and $\{M_k\}_{k\geq0}$ takes the following form:
	 \begin{equation*}
	 M_k = \begin{pmatrix} 1 & r \\ \alpha_k & \alpha_k \end{pmatrix}
	 \end{equation*}
	 for some fixed $r>0$ and a sequence of non-negative real numbers $\{\alpha_k\}_{k\geq0}$ satisfying  $\sum_{k=0}^{\infty} \alpha_k <\infty$. 
 Then, we have
 \begin{enumerate}
 	 	\item The matrix $M_b M_{b-1}\dots M_a$   is uniformly bounded for $0 \leq a < b< \infty$.
 	\item  $\{X_{k}\}_{k\geq0}$ is uniformly bounded.
 	\end{enumerate}
\end{prop}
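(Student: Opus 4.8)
The plan is to prove part (1) first---the uniform boundedness of the matrix products---and then deduce part (2) by a routine iteration of the recursive inequality. For part (1), the essential observation is that $M_k$ splits as $M_k = N + E_k$, where
\[
N = \begin{pmatrix} 1 & r \\ 0 & 0 \end{pmatrix}, \qquad E_k = \begin{pmatrix} 0 & 0 \\ \alpha_k & \alpha_k \end{pmatrix},
\]
and $N$ is \emph{idempotent}: a direct computation gives $N^2 = N$, hence $N^p = N$ for every integer $p \geq 1$ while $N^0 = I$. One also checks $\|N\| = \sqrt{1+r^2}$ and $\|E_k\| = \sqrt{2}\,\alpha_k$, and note $\|I\| = 1 \le \sqrt{1+r^2}$.

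I would then expand the product $M_b M_{b-1}\cdots M_a$ (a product of $m := b-a+1$ factors) by choosing, at each of the $m$ positions, either $N$ or the corresponding $E$. A term in which $E$ is chosen at exactly $l$ positions $i_1 < \cdots < i_l$ is a product of $l$ factors $E$ separated and flanked by at most $l+1$ blocks of consecutive $N$'s; since $N^p = N$, each such block collapses to $N$ or $I$, so its norm is at most $\sqrt{1+r^2}$. Consequently the norm of such a term is at most $(\sqrt{1+r^2})^{l+1}(\sqrt{2})^l \prod_{j=1}^l \alpha_{i_j}$. Summing over the admissible index sets, the ordering constraint $i_1<\cdots<i_l$ produces the elementary symmetric polynomial in the selected $\alpha$'s, which is bounded by $\frac{1}{l!}\big(\sum_{k=a}^b \alpha_k\big)^l \leq S^l/l!$ with $S := \sum_{k=0}^\infty \alpha_k<\infty$. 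Therefore
\[
\|M_b\cdots M_a\| \leq \sqrt{1+r^2}\sum_{l=0}^{m}\frac{\big(\sqrt{2(1+r^2)}\,S\big)^l}{l!} \leq \sqrt{1+r^2}\,\exp\!\big(\sqrt{2(1+r^2)}\,S\big) =: C_0,
\]
a bound independent of $a$ and $b$. This establishes (1).

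The step I expect to require the most care is exactly this counting argument. Because $\|N\| = \sqrt{1+r^2}$ may exceed $1$, the naive submultiplicative estimate $\|M_b\cdots M_a\| \leq \prod_k \|M_k\|$ is useless, as it blows up geometrically in $m$. The idempotence of $N$ is precisely what prevents the ``main'' part of the product from growing, and the factorial $1/l!$ arising from the ordering of the selected positions is what makes the resulting series converge despite the geometric factor $(\sqrt{2(1+r^2)})^l$. Getting the collapse of the $N$-blocks and the symmetric-function bound stated cleanly is the crux.

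Finally, for part (2) I would iterate the hypothesis $X_{k+1}\leq M_kX_k + Y_k$. Since every $M_k$ has non-negative entries and the vectors $X_k, Y_k$ are non-negative, the componentwise inequalities are preserved under left multiplication by the $M_k$, giving
\[
X_k \leq (M_{k-1}\cdots M_0)X_0 + \sum_{i=0}^{k-1}(M_{k-1}\cdots M_{i+1})Y_i,
\]
with the convention that the empty product is $I$. Passing to norms (the Euclidean norm is monotone on non-negative vectors) and invoking the uniform bound $C_0$ from part (1) yields
\[
\|X_k\| \leq C_0\|X_0\| + C_0\sum_{i=0}^{k-1}\|Y_i\| \leq C_0\Big(\|X_0\| + \sum_{i=0}^{\infty}\|Y_i\|\Big),
\]
which is finite by the assumption $\sum_k\|Y_k\|<\infty$ and independent of $k$. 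Hence $\{X_k\}_{k\geq 0}$ is uniformly bounded.
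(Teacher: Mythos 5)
Your proof is correct, and your part (2) is essentially the paper's own argument: iterate the componentwise inequality (valid because the entries of $M_k$ and the vectors are non-negative), then pass to norms and invoke the uniform bound from part (1). Part (1), however, is where you genuinely diverge from the paper. The paper's proof introduces a weighted $\ell^1$ norm $\|(x,y)^\top\|_c = |x| + c|y|$ with $c=\max\{r,1\}$ and verifies by a two-line computation that each $M_k$ is an almost-contraction in the induced operator norm, $\|M_k\|_{\mathcal{L}} \leq 1+c\alpha_k$; submultiplicativity together with $1+t\leq e^t$ then gives $\|M_b\cdots M_a\|_{\mathcal{L}} \leq \exp\big(c\sum_{k}\alpha_k\big)$, uniformly bounded by the summability hypothesis. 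Your argument instead splits $M_k = N + E_k$ with $N$ idempotent, expands the product, collapses the blocks of consecutive $N$'s via $N^p=N$, and tames the sum over the positions of the $E$-factors with the elementary symmetric function bound $e_l \leq S^l/l!$; all of these steps check out (in particular $\|N\|=\sqrt{1+r^2}$, $\|E_k\|=\sqrt{2}\,\alpha_k$, and the $1/l!$ bound are correct). The trade-off is clear: the paper's proof is much shorter, but the entire difficulty is hidden in guessing the norm that absorbs the off-diagonal entry $r$ into the weight $c$; your perturbative expansion requires no such guess, keeps everything in the Euclidean norm, and would work verbatim whenever the unperturbed matrix $N$ is merely power-bounded, $\sup_{p\geq 1}\|N^p\|<\infty$, rather than idempotent --- at the cost of the combinatorial bookkeeping that you correctly identify as the crux.
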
 

\begin{proof}
(1) We take $c = \max\{r,1\}$ and define a weighted norm $\|\cdot\|_{c}$ in $\mathbb{R}^2$ by
\begin{equation*}
\|(x,y)^\top\|_c = |x| + c|y| \quad \textcolor{black}{\mbox{for }x,y\in\mathbb{R}}.
\end{equation*}
For a matrix $M \in \mathbb{R}^{2\times 2}$, we define the  operator norm by
\begin{equation*}
\|M\|_{\mathcal{L}} =\sup_{v \in \mathbb{R}^2 \setminus \{0\}} \frac{\|Mv\|_{c}}{\|v\|_{c}}.
\end{equation*}
We claim that
\begin{equation*}\label{eq-2-3}
\|M_k\|_{\mathcal{L}} \leq 1+ c\alpha_k.
\end{equation*}
To show this, we take an arbitrary $v = (x,y)^\top \in \mathbb{R}^2$ and compute
\begin{equation*}
M_kv=\begin{pmatrix} 1 & r \\ \alpha_k &\alpha_k \end{pmatrix} \begin{pmatrix} x \\ y \end{pmatrix} = \begin{pmatrix} x + ry \\ \alpha_k x + \alpha_k y \end{pmatrix},
\end{equation*}
which has the weighted $L^1$ norm as
\begin{equation*}
\|M_kv\|_{c} = |x+ry| + c |\alpha_k x + \alpha_k y| \leq (1+c\alpha_k)|x| + (r+c \alpha_k )|y|.
\end{equation*}
Since $c = \max\{r,1\}$ we have
\begin{equation*}
r+c \alpha_k \leq c (1+c\alpha_k)
\end{equation*}
so that
\begin{equation*}
\begin{split}
\|M_kv\|_c&\leq(1+c\alpha_k) |x| + (r+c \alpha_k ) |y|
\\
&\leq (1+c\alpha_k) (|x|+c|y|) = (1+c\alpha_k) \|v\|_c.
\end{split}
\end{equation*}
This proves the claim. Using \eqref{eq-2-3}, for any $1\leq a<b<\infty$ we  obtain
\begin{equation*}
\begin{split}
\Big\| M_b M_{b-1}\dots M_a \Big\|_{\mathcal{L}}& \leq \prod_{k=a}^b \|M_k \|_{\mathcal{L}}
\\
& \leq \prod_{k=a}^b (1+c\alpha_k)
\\
& \leq \prod_{k=a}^b e^{c\alpha_k} = \exp \Big(c\sum_{k=a}^b \alpha_k\Big).
\end{split}
\end{equation*}
Combining this with the assumption that $\sum_{k=0}^{\infty} \alpha_k < \infty$, we conclude that $\Big\| M_b M_{b-1}\dots M_a \Big\|_{\mathcal{L}} $ is uniformly bounded for $0\leq a<b <\infty$.

\noindent (2) By induction, the iteration implies that
\begin{equation}\label{eq-2-10}
X_{k+1} \leq \Big( M_k M_{k-1}\dots M_0\Big)X_0 + \sum_{j=0}^{k-1} \Big( M_k M_{k-1}\dots M_{j+1}\Big) Y_j + Y_k.
\end{equation}
This gives the desired uniform boundedness of $X_k$ since $M_b M_{b-1}\dots M_a$ is uniformly bounded and $\sum_{j=0}^{\infty} \big\|Y_j \big\|< \infty$. 
\end{proof}
{\color{black}
We finish this section by proving that the two conditions on the consensus matrix $W$ introduced earlier actually leads to $\beta\in[0,1)$. Its proof was sketched  briefly in \cite{QL}. We provide a more detailed version of the proof for the readers' convenience.
\begin{lem}\label{newlemma}\cite{QL}
Assume that the consensus matrix $W$ satisfies the following:
\begin{itemize}
\item $W$ is a doubly-stochastic $n\times n$ matrix.
\item The directed graph associated to $W$ is strongly connected, and has a self-loop at every vertex.
\end{itemize}
Then the following assertions hold:
\begin{enumerate}
\item The graph associated to $W^\top W$ is connected and aperiodic. (Since $W^\top W$ is symmetric, we may either interpret its associated graph as a strongly connected directed graph, or a connected undirected graph, depending on preference.)
\item The largest eigenvalue of $W^\top W$ is $1$, and it is simple.
\item The spectral norm $\beta$ of $W-\frac{1}{n}1_n1_n^\top$ satisfies $\beta\in[0,1)$.
\end{enumerate}
\end{lem}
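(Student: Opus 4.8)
The plan is to transfer every question about $W$ to the symmetric matrix $B := W^\top W$, for which Perron--Frobenius theory is cleanest, and to record first the elementary structural facts. Since $W$ is doubly stochastic, so is $W^\top$, whence $B1_n = W^\top(W1_n) = W^\top 1_n = 1_n$ and likewise $1_n^\top B = 1_n^\top$, so $B$ is again doubly stochastic with nonnegative entries. Moreover $B$ is symmetric and positive semidefinite (being a Gram matrix), and its diagonal is strictly positive because $B_{ii} = \sum_k w_{ki}^2 \geq w_{ii}^2 > 0$ by the self-loop hypothesis $w_{ii}>0$. These observations reduce assertions (2) and (3) to the connectivity of $B$, which is assertion (1).

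For assertion (1) the crux---and the step I expect to be the only nontrivial one---is the irreducibility of $B$. I would argue by contradiction: if the graph of $B$ were disconnected, there would be a nontrivial partition $[n] = S \sqcup S^c$ with $B_{ij}=0$ for all $i\in S$, $j\in S^c$. Since $B_{ij}=\sum_k w_{ki}w_{kj}$ is a sum of nonnegative terms, this forces $w_{ki}w_{kj}=0$ for every $k$; equivalently, the support of each row of $W$ lies entirely among the columns indexed by $S$ or entirely among those indexed by $S^c$. The self-loop condition $w_{kk}>0$ then pins each such row to its own block, so that $w_{kj}=0$ whenever $k\in S$ and $j\in S^c$. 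This says no edge of $W$ crosses the cut $(S,S^c)$ in one direction, contradicting the strong connectivity of the graph of $W$. Hence $B$ is irreducible, i.e. its graph is connected; aperiodicity is then immediate from the strictly positive diagonal (a self-loop is an odd cycle, equivalently the associated directed graph has period $1$).

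Assertion (2) is a direct application of Perron--Frobenius. Because $B$ is nonnegative with all row sums equal to $1$, Gershgorin (or the stochasticity) gives spectral radius $\le 1$, while $B1_n=1_n$ shows the value $1$ is attained; since $B$ is symmetric its spectral radius equals its largest eigenvalue, and since $B$ is irreducible the Perron--Frobenius theorem guarantees this top eigenvalue is simple. Thus $1$ is the simple largest eigenvalue of $B$, and the remaining eigenvalues $\lambda_2\ge\cdots\ge\lambda_n\ge 0$ all satisfy $\lambda_i<1$.

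Finally, for assertion (3) I would convert the spectral norm of $W-P$, where $P:=\tfrac1n 1_n 1_n^\top$, into a statement about $B$. Using $W1_n=1_n$ and $1_n^\top W=1_n^\top$ one checks the relations $WP=PW=P$, $W^\top P=PW^\top=P$, and $P^2=P$, which yield the clean identity $(W-P)^\top(W-P)=W^\top W-P=B-P$. Since $B$ and $P$ commute (indeed $BP=PB=P$) and $P$ is the orthogonal projection onto $\mathrm{span}(1_n)$, the matrix $B-P$ is diagonalized in the same eigenbasis as $B$: it has eigenvalue $0$ on $\mathrm{span}(1_n)$ and eigenvalues $\lambda_2,\dots,\lambda_n$ on $1_n^\perp$. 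Therefore $\beta^2=\lambda_{\max}(B-P)=\lambda_2<1$ by assertion (2), and as $\beta\ge 0$ trivially we conclude $\beta\in[0,1)$. The only place where genuine care is needed is the cut argument in assertion (1); everything else is bookkeeping with doubly stochastic matrices and the spectral theorem.
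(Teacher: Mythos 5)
Your proof is correct. Parts (2) and (3) follow the paper's route almost verbatim: Perron--Frobenius applied to the irreducible doubly stochastic matrix $B=W^\top W$ gives $1=\lambda_1>\lambda_2$, and the identity $(W-P)^\top(W-P)=W^\top W-P$ with $P=\tfrac{1}{n}1_n1_n^\top$ (a consequence of double stochasticity), combined with the spectral decomposition of $W^\top W$, gives $\beta=\sqrt{\lambda_2}<1$. Where you genuinely diverge is assertion (1), which you rightly identify as the crux. The paper argues directly rather than by contradiction: setting $w_0=\tfrac12\min_i w_{ii}>0$ and $\widetilde W=W-w_0I_n$ (a nonnegative matrix whose graph is exactly that of $W$), it expands $W^\top W=(\widetilde W+w_0I_n)^\top(\widetilde W+w_0I_n)\ge w_0\widetilde W$ entrywise, so the graph of $W^\top W$ contains the graph of $W$ as a subgraph and therefore inherits strong connectivity and aperiodicity in one stroke. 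You instead run a cut argument: if $B_{ij}=0$ across a partition $(S,S^c)$, nonnegativity of the terms in $B_{ij}=\sum_k w_{ki}w_{kj}$ forces each row of $W$ to have support in a single block, the self-loops pin each row to its own block, hence no edge of $W$ crosses the cut, contradicting strong connectivity; aperiodicity is then supplied separately by the positive diagonal of $B$. Both arguments are sound. The paper's entrywise-domination trick buys a stronger, reusable fact (explicit edge containment, which delivers connectivity and aperiodicity simultaneously), while yours is more elementary and makes transparent exactly which products $w_{ki}w_{kj}$ would have to vanish for the graph of $W^\top W$ to disconnect.
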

\begin{proof}
(1) The assumption that the graph associated to $W$ has a self-loop at every vertex, is equivalent to $w_{ii}>0$ for all $1\leq i\leq n$. Let $w_0:=\frac{1}{2}\min\limits_{1\leq i\leq n}w_{ii}>0$ and $\widetilde W:=W-w_0 I_n$. Then all entries of $\widetilde W$ are nonnegative, and the directed graph associated to $\widetilde W$ is precisely the same as that of $W$. Note that
\[
W^\top W=(\widetilde W+w_0I_n)^\top(\widetilde W+w_0I_n)\geq w_0 \widetilde W,
\]
with the inequality being in entrywise sense. Hence the graph associated to $W^\top W$ contains that associated to $\widetilde W$ (or $W$) as its subgraph, which is indeed strongly connected and aperiodc.

(2) Since $W^\top W$ is symmetric positive semidefinite, we can arrange its eigenvalues as $\lambda_1\geq \lambda_2\geq\dots\geq\lambda_n\geq0$. The Perron-Frobenius theorem combined with (1) yields $1=\lambda_1>\lambda_2$.

(3) Since $W^\top W$ is symmetric, it can be diagonalized by an orthogonal matrix, i.e.,
\[
W^\top W=Q \operatorname{diag}(\lambda_1,\dots,\lambda_n)Q^\top,\quad Q:=[u_1|\dots|u_n],
\]
with $u_i$ being an eigenvector associated to $\lambda_i$, $i=1,\dots,n$. Note that $\lambda_1=1$ and $u_1=\frac{1}{\sqrt{n}}1_n$, because $W$ is doubly-stochastic. Hence
\[
W^\top W=\sum_{i=1}^n \lambda_i u_iu_i^\top=\frac{1}{n}1_n1_n^\top+\sum_{i=2}^n \lambda_i u_iu_i^\top,
\]
which yields
\begin{align*}
\left\|W-\frac{1}{n}1_n1_n^\top\right\|&=\sqrt{\mbox{the largest eigenvalue of }(W-\frac{1}{n}1_n1_n^\top)^\top(W-\frac{1}{n}1_n1_n^\top)}\\
&=\sqrt{\mbox{the largest eigenvalue of }W^\top W-\frac{1}{n}1_n1_n^\top}=\sqrt{\lambda_2}<1,
\end{align*}
where the second equality follows using that $1_n 1_n^\top W = 1_n 1_n^\top$ and $W^\top 1_n 1_n^\top  = 1_n 1_n^\top$ since $W$ is doubly-stochastic. The proof is finished.
\end{proof}

}
\section{Proof of Theorem \ref{Main1} and Theorem \ref{thm-1-2}}\label{sec-4}
In this section, we obtain the convergence estimates of NEAR-DGD$^+$ when the global objective function $f$ is convex. 
\begin{proof}[Proof of Theorem \ref{Main1}]We define
\begin{equation}\label{eq-3-10}
A_k:= \|\bar{\bf x}_k-{\bf x}_*\| = \sqrt{n} \|\bar{x}_k - x_*\|,\quad B_k:=\|{\bf x}_k-\bar{\bf x}_k\|
\end{equation}
and proceed to obtain a recursive estimate for $A_k$ and $B_k$. We first estimate $B_{k+1}$ in terms of $A_k$ and $B_k$. For this, we use the algorithm \eqref{NEAR DGD} along with \eqref{average} and  \eqref{11t} to find
\begin{equation}\label{eq-3-30}
\begin{split}
&\|\bar{{\bf x}}_{k+1} - {\bf x}_{k+1}\|\cr
 & = \|(\bar{{\bf x}}_k - \mu \overline{\nabla F}({\bf x}_k)) - (W^{t(k)}\otimes I_p) ({\bf x}_k - \mu \nabla F({\bf x}_k))\|
\\
& = \left\| \left[\left( \frac{1}{n} 1_n 1_n^\top  - W^{t(k)}\right)\otimes I_p\right] ({\bf x}_k -\mu \nabla F({\bf x}_k) \right\|
\\
&\leq \left\| \left[\left( \frac{1}{n} 1_n 1_n^\top  - W^{t(k)}\right)\otimes I_p\right] {\bf x}_k \right\| + \mu \left\| \left[\left( \frac{1}{n} 1_n 1_n^\top  - W^{t(k)}\right)\otimes I_p\right] \nabla F({\bf x}_k)\right\|.
\end{split}
\end{equation}
Here we can estimate the right hand side using 
\begin{equation}\label{eq-3-31}
\left\{\left( \frac{1}{n} 1_n 1_n^\top  - W^{t(k)}\right)\otimes I_p\right\} \bar{\bf x}_k=0 
	\quad \textrm{and}\quad
	\Big\| \frac{1}{n} 1_n 1_n^\top  - W^{t(k)}\Big\|_2\leq \beta^{t(k)},
	\end{equation}
where the second inequality came from Lemma \ref{newlemma} and the relation
\[
 W^{t(k)}-\frac{1}{n} 1_n 1_n^\top  = \left(W-\frac{1}{n} 1_n 1_n^\top \right)^{t(k)}.
\]
Using \eqref{eq-3-31} in \eqref{eq-3-30} and  the triangle inequality,  we  achieve
\begin{align}\label{consensus estimate}
\begin{split}
\|\bar{{\bf x}}_{k+1} - {\bf x}_{k+1}\|&\leq {}\beta^{t(k)} \|{\bf x}_k - \bar{\bf x}_k\| + {}\mu \beta^{t(k)} \|\nabla F({\bf x}_k)\|\cr
 &\leq {}\beta^{t(k)} \|{\bf x}_k - \bar{\bf x}_k\| + {}\mu \beta^{t(k)} 
 \left\{\|\nabla F({\bf x}_k)-\nabla F(\bar{\bf x}_k)\|\right.\cr
 &\quad\left.+\|\nabla F(\bar{\bf x}_k)-\nabla F({\bf x}_*)\|
 +\|\nabla F({\bf x}_*)\|
 \right\}.\cr
 \end{split}
 \end{align}
We note that the $L_j$-smoothness of $f_j$ yields that for $x, y \in \mathbb{R}^{np}$ we have
\begin{equation*}
\begin{split}
\| \nabla F (x) - \nabla F(y) \| & = \Bigg( \sum_{j=1}^n \|\nabla f_j (x^j) - \nabla f_j (y^j) \|^2 \Bigg)^{1/2} 
\\
&\leq \Bigg(\sum_{j=1}^n \textcolor{black}{L_j^2 \|x^j -y^j\|^2} \Bigg)^{1/2} 
\\
& \leq L \|x-y\|,
\end{split}
\end{equation*}
where we denoted $x= ((x^1)^\top, \cdots, (x^n)^\top)^\top$ and $y= ((y^1)^\top, \cdots, (y^n)^\top)^\top$. Using this in \eqref{consensus estimate} we obtain
 \begin{equation}\label{eq-3-32}
 \begin{split}
 \|\bar{{\bf x}}_{k+1} - {\bf x}_{k+1}\| &\leq {}(1+\mu L)\beta^{t(k)} 
 \|{\bf x}_k-\bar{\bf x}_k \|
 +{}(\mu L)\beta^{t(k)}\|\bar{\bf x}_k-{\bf x}_*\|
 +{}\mu\beta^{t(k)}D
 \\
 & ={}(1+\mu L)\beta^{t(k)} 
 \|{\bf x}_k-\bar{\bf x}_k \|
 + {}(\mu \sqrt{n} L)\beta^{t(k)}\|\bar{x}_k-{x}_*\|
 +{}\mu\beta^{t(k)}D,
\end{split}
\end{equation}
where $D:=\|\nabla F({\bf x}_*)\|$. This gives
\begin{align}\label{estimate A}
B_{k+1} \leq  {}(\mu L)\beta^{t(k)}A_k+{}(1+\mu L)\beta^{t(k)}B_k+{}\mu\beta^{t(k)} D. 
\end{align}
Next we estimate the error $A_{k+1}$ between the average and the optimizer $x_*$ in terms of $A_k$ and $B_k$. Using \eqref{average} and the triangle inequality, we find
\begin{align}\label{main comp}
\begin{split}
\|\bar{{ x}}_{k+1} - { x}_*\| 
 &= \bigg\|\bar{{x}}_k - {x}_* - \frac{\mu}{n} \sum_{j=1}^n \nabla f_j (x_{j,k})\bigg\|\\
& = \bigg\|\bar{{ x}}_k - {x}_* - \frac{\mu}{n} \sum_{j=1}^n \nabla f_j (\bar{x}_k) + \mu \bigg( \frac{1}{n} \sum_{j=1}^n \nabla f_j (\bar{x}_k)- \frac{1}{n} \sum_{j=1}^n \nabla f_j (x_{j,k})\bigg)\bigg\|\cr
&\leq \|\bar{{  x}}_k - {  x}_* - \mu \nabla f (\bar{{ x}}_k) \|+ \mu \bigg\|\frac{1}{n} \sum_{j=1}^n \nabla f_j (\bar{x}_k)- \frac{1}{n} \sum_{j=1}^n \nabla f_j (x_{j,k})\bigg\|.
\end{split}
\end{align}
We make use of $\nabla f(x_*)=0$ to compute
\begin{equation}\label{eq-3-1}
\begin{split}
&\|\bar{{x}}_k- {x}_*-\mu{\nabla f}(\bar{{x}}_k)\|^2\\
&\quad= \|\bar{{ x}}_k-{ x}_*\|^2
-2\mu\langle  \bar{{x}}_k-{x}_*,\nabla f(\bar{{ x}}_k)\rangle
+\mu^2\|\nabla f(\bar{{ x}}_k)\|^2\\
&\quad=\|\bar{{x}}_k-{x}_*\|^2
-2\mu\langle  \bar{{x}}_k-{x}_*, {\nabla f}(\bar{{x}}_k)
- {\nabla f}({{x}}_*)\rangle
+\mu^2\|{\nabla f}(\bar{{x}}_k)-{\nabla f}({x}_*)\|^2.
\end{split}
\end{equation}
From Lemma \ref{lem-2-1}, we find
\begin{equation}\label{eq-3-2}
\langle \bar{x}_k -{x}_*,  {\nabla f}(\bar{x}_k)- {\nabla f}({ x}_*)\rangle \geq\frac{1}{L}\| \textcolor{black}{{\nabla f}(\bar{ x}_k)}-{\nabla f}({x}_*)\|^2,
\end{equation}
and apply it to \eqref{eq-3-1} to obtain
\begin{align*}
\begin{split}
& \|\bar{{x}}_k-{x}_*-\mu{\nabla f}(\bar{{x}}_k)\|^2\\
&\quad\leq\|\bar{{x}}_k-{x}_*\|^2+\mu\left(\mu-\frac{2}{L}\right)\|{\nabla f}(\bar{{x}}_k)-{\nabla f}({{x}}_*)\|^2\cr
&\quad\leq \|\bar{{x}}_k-{x}_*\|^2
\end{split}
\end{align*}
for $\mu \in (0, 2/L]$.
This gives us the estimate
\begin{align}\label{eq-3-6}
&\|\bar{{x}}_k-{x}_*-\mu {\nabla f}(\bar{{ x}}_k)\|
\leq \|\bar{{x}}_k-{x}_*\|.
\end{align}
On the other hand, we use the $L_j$-smoothness of $f_j$ and the triangle inequality to deduce
\begin{align}\label{insert1}
\begin{split} 
 \bigg\|\frac{1}{n} \sum_{j=1}^n \nabla f_j (\bar{x}_k)- \frac{1}{n} \sum_{j=1}^n \nabla f_j (x_{j,k})\bigg\| & \leq \frac{1}{n} \sum_{j=1}^n L_j \|\bar{x}_k - x_{j,k}\|
 \\
&\leq \frac{L}{\sqrt{n}} \|\bar{{\bf x}}_k - {\bf x}_k\|.
\end{split}
\end{align}
Combining \eqref{main comp},  \eqref{eq-3-6} and \eqref{insert1}, we get
\begin{align}\label{line for A}
\begin{split}
\|\bar{{x}}_{k+1}- {{x}}_*\|
\leq \|\bar{{x}}_k- {{x}}_*\|+ \frac{\mu L}{\sqrt{n}}\|{\bf x}_k-\bar{{\bf x}}_k\|.
\end{split}
\end{align}
From \eqref{estimate A} and \eqref{line for A}, we get the following iterative relation:
\begin{equation}\label{eq-3-50}
\left(\begin{array}{c} A_{k+1}\cr B_{k+1}\end{array}\right) \leq
\left(\begin{array}{cc} 1 &  \mu L\cr {}(\mu L) \beta^{t(k)}& {} (1+\mu L)\beta^{t(k)}\end{array}\right)
\left(\begin{array}{c} A_{k}\cr B_{k}\end{array}\right)
+
{}\beta^{t(k)}\left(\begin{array}{c} 0\cr \mu D\end{array}\right),
\end{equation}
\textcolor{black}{
which clearly implies
\begin{align*}
\left(\begin{array}{c} A_{k+1}\cr B_{k+1}\end{array}\right) \leq
\left(\begin{array}{cc} 1 &  \mu L\cr {}(1+\mu L) \beta^{t(k)}& {} (1+\mu L)\beta^{t(k)}\end{array}\right)
\left(\begin{array}{c} A_{k}\cr B_{k}\end{array}\right)
+
{}\beta^{t(k)}\left(\begin{array}{c} 0\cr \mu D\end{array}\right). 
\end{align*}
}
Now we apply Proposition \ref{prop-2-2} to conclude that there exists $R>0$ such that
\begin{equation*}
\|\bar{\bf x}_k -\bold x_*\| \leq R \quad \textrm{and}\quad \|\bar{\bold x}_k -\bold x_k \| \leq R
\end{equation*}
for all $k \in \mathbb{N}$. By applying this bound to \eqref{estimate A}, we obtain
\begin{equation*}
\|\bar{\bold x}_{k+1} -\bold x_{k+1}\| \leq {}\beta^{t(k)} \Big[ (1+2\mu L) R + \mu D \Big].
\end{equation*}
This completes the proof.
\end{proof}

\begin{proof}[Proof of Theorem \ref{thm-1-2}]
Applying \eqref{eq-3-6} and 
\eqref{insert1} to \eqref{main comp}, we find
\begin{equation}\label{eq-3-12}
\begin{split}
\|\bar{x}_{k+1} -x_*\|^2 & \leq \| \bar{x}_k - x_* - \mu {\nabla f}(\bar{x}_k) \|^2
\\
&\quad + \frac{2 \mu L }{\sqrt{n}}\|\bar{x}_k - x_* \| \|\bar{\bold x}_k -\bold x_k\| + \frac{\mu^2 L^2}{n} \|\bar{\bold x}_k -\bold x_k\|^2.
\end{split}
\end{equation}
We use {$\nabla f(x_*)=0$} and apply \eqref{eq-3-2} to obtain  
\begin{equation*}
	\begin{split}
		- 2\mu \langle \bar{{x}}_k -{x}_*,~ {\nabla f}(\bar{{ x}}_k) \rangle		& = - 2\mu \langle \bar{{x}}_k -{x}_*,~  {\nabla f}(\bar{{x}}_k)-  {\nabla f}( {x}_*) \rangle 
		 \\
		& \leq- \mu \langle \bar{{x}}_k -{x}_*,~ {\nabla f}(\bar{{x}}_k) \rangle - \frac{\mu}{L} \|\nabla
f(\bar{{x}}_k)\|^2
	\\
		& \leq \mu\left\{f({{x}}_*)-f(\bar{{x}}_k)\right\}  - \frac{\mu}{L} \|\nabla
 f(\bar{{x}}_k)\|^2,
	\end{split}
\end{equation*}
where we used 
the convexity of $f$ in the second inequality. Using this we find
\begin{equation}\label{insert2}
\begin{split}
&\|\bar{{x}}_k - {x}_* - \mu {\nabla f}(\bar{{x}}_k) \|^2 
\\
&\qquad = \|\bar{{x}}_k -{x}_*\|^2 - 2\mu \langle \bar{{x}}_k -{x}_*,~ {\nabla f}(\bar{{x}}_k) \rangle + \mu^2 \|{\nabla f}(\bar{{x}}_k)\|^2
\\
&\qquad \leq \|\bar{{x}}_k - {x}_*\|^2 + 
\mu\left\{f({{x}}_*)-f(\bar{{x}}_k)\right\}
 + (\mu^2 - \mu/L) \| {\nabla f}(\bar{{x}}_k)\|^2.
\end{split}
\end{equation}
Inserting (\ref{insert2}) into \eqref{eq-3-12}, we obtain
\begin{equation}\label{above}
\begin{split}
\|\bar{{x}}_{k+1} - {x}_*\|^2 & \leq \|\bar{{x}}_k - {x}_*\|^2 +
\mu\left\{f({{x}}_*)-f(\bar{{x}}_k)\right\}
 + (\mu^2 - \mu /L) \|{\nabla f}(\bar{{x}}_k)\|^2
\\
&\quad +\frac{2\mu L}{\sqrt{n}}\|\bar{{x}}_k -{x}_*\| \|\bar{{\bf x}}_k - {\bf x}_k \| + \frac{\mu^2 L^2}{n} \|\bar{{\bf x}}_k - {\bf x}_k \|^2
\\
&\leq \|\bar{{x}}_k - {x}_*\|^2 + \mu\left\{f({{x}}_*)-f(\bar{{x}}_k)\right\}
\\
&\quad + \frac{2\mu L}{\sqrt{n}} \|\bar{{x}}_k -{x}_*\| \|\bar{{\bf x}}_k -{\bf x}_k\| + \frac{\mu^2 L^2}{n}\|\bar{{\bf x}}_k - {\bf x}_k\|^2
\end{split}
\end{equation}
provided $\mu\in (0, 1/L]$.
%
Combining   \eqref{eq-1-1} with (\ref{above}), we obtain
\begin{equation*}
\|\bar{{x}}_{k+1} - {x}_*\|^2 \leq \|\bar{{x}}_k - {x}_*\|^2 + \mu\left\{f({{x}}_*)-f(\bar{{x}}_k)\right\} + 
C\beta^{t({k-1})}  + 
C\beta^{2t(k-1)}
\end{equation*}
for some constant $C>0$.
Summing this up over $k={1},\cdots T-1$, we get 
\begin{equation*}
\begin{split}
&\mu\sum_{k=1}^{T-1} \left\{f(\bar{{x}}_k)-f({{x}}_*)\right\} + \|\bar{{x}}_{T} -{x}_*\|^2 
\\
&\quad \leq \|\bar{{x}}_{1} -{x}_*\|^2 +C \sum_{k=0}^{{T-2}} \beta^{t(k)}  + C\sum_{k=0}^{{T-2}}  \beta^{2t(k)}.
\end{split}
\end{equation*}
Dividing both sides by $T$ and applying the convexity of $f$ to the left-hand side, we obtain
\begin{equation*}
\begin{split}
& f\Bigg( \frac{1}{T}\sum_{k=0}^{T-1}  \bar{x}_k \Bigg) -f(\bar{{x}}_*) \\
&\leq \frac{1}{\mu T}\Bigg[{\mu\left\{f(\bar{{x}}_0)-f(\bar{{x}}_*)\right\}}+\|\bar{{ x}}_{1} -{x}_*\|^2 + C\sum_{k=0}^{{T-1}} \beta^{t(k)}  + C\sum_{k=0}^{{T-1}} \beta^{2t(k)}\Bigg],
\end{split}
\end{equation*}
which completes the proof.
\end{proof}
\section{Proof of Theorem \ref{Main2}}\label{sec-5}
In this section, we establish the convergence result for the NEAR-DGD$^{+}$ when the global objective function belongs to a class of \textcolor{black}{convex and quasi-strongly convex functions.} Namely, we consider the class of convex functions that arise from the composition of strongly convex function and possibly \textcolor{black}{rank-deficient} matrix. The following lemma provides a coercivity estimate for those functions which will be an essential tool for the convergence analysis.
\begin{lem}\label{lem-4-1} Assume the function $f:\mathbb{R}^p\rightarrow \mathbb{R}$ takes the form $f(x) = g(Hx)$ for $H\in\mathbb R^{m\times p}$ and an $\alpha$-strongly convex and $L$-smooth function $g:\mathbb{R}^{m}\rightarrow \mathbb{R}$. Then for some $c_H>0$, we have
	\begin{equation*}
	\begin{split}
	&\langle \nabla f(x) - \nabla f([x]), x-[x]\rangle
	\\
	& \qquad\geq \frac{{L}\alpha c_H}{L + \alpha}\|x -[x]\|^2 + \frac{C_H}{L + \alpha}\|\nabla f(x) - \nabla f([x])\|^2,
	\end{split}
	\end{equation*}
	where $c_H$ is the coefficient of the Hoffman inequality \cite{NNG, WL}, and $C_H =1/\|H \|_2$ with $\|H \|_2 := \sup_{x \in \mathbb{R}^p \setminus \{0\}} \frac{\|H x\|_2}{\|x\|_2}$.
\end{lem}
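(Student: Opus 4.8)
The plan is to peel off the matrix $H$ and reduce the claimed estimate to the two-point monotonicity inequality for the inner function $g$, which is precisely Lemma~\ref{lem-2-1}, and then to push the two resulting terms back to the level of $f$ and $x$ using the Hoffman inequality for the quadratic term and the operator norm of $H$ for the gradient term.

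First I would record that $\nabla f(x) = H^\top \nabla g(Hx)$ by the chain rule, so that
\begin{equation*}
\langle \nabla f(x) - \nabla f([x]),\, x-[x]\rangle = \langle \nabla g(Hx) - \nabla g(H[x]),\, Hx - H[x]\rangle,
\end{equation*}
where I moved $H^\top$ across the inner product so that it acts as $H$ on the second slot. Since $g$ is $\alpha$-strongly convex and $L$-smooth, applying Lemma~\ref{lem-2-1} to $g$ at the two points $Hx$ and $H[x]$ bounds the right-hand side below by
\begin{equation*}
\frac{L\alpha}{L+\alpha}\|Hx - H[x]\|^2 + \frac{1}{L+\alpha}\|\nabla g(Hx) - \nabla g(H[x])\|^2.
\end{equation*}

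It then remains to convert these two terms into the quantities $\|x-[x]\|^2$ and $\|\nabla f(x)-\nabla f([x])\|^2$ appearing in the claim. For the first term I would invoke the Hoffman inequality $\|Hx - H[x]\|^2 \geq c_H \|x-[x]\|^2$ from Remark~\ref{rem-2-8}, which directly produces the coefficient $\frac{L\alpha c_H}{L+\alpha}$. For the second term I would use $\nabla f(x)-\nabla f([x]) = H^\top(\nabla g(Hx)-\nabla g(H[x]))$ together with $\|H^\top\|_2 = \|H\|_2$ to obtain $\|\nabla f(x)-\nabla f([x])\| \leq \|H\|_2\,\|\nabla g(Hx)-\nabla g(H[x])\|$, hence $\|\nabla g(Hx)-\nabla g(H[x])\|^2 \geq \|H\|_2^{-2}\|\nabla f(x)-\nabla f([x])\|^2$; this contributes $\frac{C_H}{L+\alpha}$ with $C_H = 1/\|H\|_2^2$, matching the constant used later in Theorem~\ref{Main2} and Remark~\ref{rem-2-8}.

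The argument is essentially bookkeeping, so I do not expect a serious obstacle; the only points requiring care are that one must use the full monotonicity inequality of Lemma~\ref{lem-2-1} rather than bare strong convexity, since the latter would recover only the quadratic term, and that the two norms must be handled asymmetrically: the Hoffman bound lower-bounds $\|Hx-H[x]\|$ in terms of $\|x-[x]\|$ (a statement that $H$ is ``injective enough'' on directions transverse to $\mathrm{Ker}(H)$), whereas the gradient term needs the upper bound $\|H^\top\|_2 = \|H\|_2$ in the opposite direction. I would also note at the outset that the projection $[x]$ is well defined because $X^*$ is closed and convex, and that the Hoffman coefficient $c_H>0$ exists for the affine optimal set, so that both constants appearing in the statement are meaningful.
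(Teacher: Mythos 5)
Your proposal is correct and follows essentially the same route as the paper's proof: pass $H^\top$ across the inner product, apply Lemma \ref{lem-2-1} to $g$ at the points $Hx$ and $H[x]$, then use the Hoffman inequality for the quadratic term and the bound $\|H^\top v\|\leq \|H\|_2\|v\|$ for the gradient term. Your careful handling of the squaring, giving the constant $1/\|H\|_2^2$, is in fact the correct reading and is consistent with the definition of $C_H$ used in Theorem \ref{Main2}, whereas the lemma's statement writes $C_H = 1/\|H\|_2$ — a minor inconsistency in the paper itself.
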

\begin{proof}
	By applying Lemma \ref{lem-2-1} with the fact that $g$ is strongly convex and smooth, we find
	\begin{equation*}
	\begin{split}
	&\langle\nabla f(x) - \nabla f(y) ,\, x-y \rangle
	\\
	&\qquad = \langle H^\top  \nabla g(Hx) - H^\top  \nabla g(Hy) , \,x-y \rangle
	\\
	&\qquad =\langle \nabla g(Hx) - \nabla g(Hy) , \,Hx - Hy  \rangle
	\\
	&\qquad  \geq \frac{{L}\alpha}{L + \alpha}\|Hx -Hy\|^2 + \frac{1}{L + \alpha}\|\nabla g(Hx) - \nabla g(Hy)\|^2.
	\end{split}
	\end{equation*}
	With the choice of $y=[x]$, we get
	\begin{equation}\label{put1}
	\begin{split}
	&\langle\nabla f(x) - \nabla f([x]) ,\,  x-[x] \rangle
	\\
	&\qquad \geq \frac{{L}\alpha}{L + \alpha}\|Hx -H[x]\|^2 + \frac{1}{L + \alpha}\|\nabla g(Hx) - \nabla g(H[x])\|^2,
	\end{split}
	\end{equation}
	Then, we have from the Hoffman inequality \cite{NNG, WL} that
	\begin{equation}\label{put2}
	\|Hx - H[x]\|^2 \geq c_H \|x-[x]\|^2
	\end{equation}
	and from the definition of $f$ that
	\begin{equation}\label{put3}
	\begin{split}
	\|\nabla g (Hx) - \nabla g(H[x])\| &\geq C_H \|A^\top  \nabla g(Hx) -H^\top  \nabla g(H[x])\| 
	\\
	&= C_H \|\nabla f(x) - \nabla f ([x])\|.
	\end{split}
	\end{equation}
	Inserting \eqref{put2}, \eqref{put3} into \eqref{put1}, we get the desired result.	
\end{proof}
We now turn to the proof of Theorem \ref{Main2}.
\subsection{Proof of the Theorem \ref{Main2}}:
{In this proof, we denote any positive constant that is independent of the number of iterations $k\in\mathbb N\cup\{0\}$, by $C>0$.}  First we use \eqref{eq-2-6} to compute
\begin{align}\label{eq-4-5}
\begin{split}
\|\bar{{x}}_{k+1}-{ [\bar{x}_{k}]}\|&=\bigg\|\bar{{x}}_k-{ [\bar{ x}_{k}]}-\frac{\mu}{n} \sum_{j=1}^n \nabla f_j (x_{j,k}) \bigg\|\cr
&=\bigg\|\bar{{x}}_k-{ [\bar{ x}_{k}]}-\frac{\mu}{n} \sum_{j=1}^n \nabla f_j (\bar{x}_k)
+\frac{\mu}{n} \sum_{j=1}^n \Big(\nabla f_j (\bar{x}_k)-\nabla f_j (x_{j,k})\Big) \bigg\|\cr
&\leq\Big\|\bar{{x}}_k-{ [\bar{ x}_{k}]}-{\mu} \nabla f (\bar{x}_k)\Big\|
+\frac{\mu L}{n} \sum_{j=1}^n\|\bar{x}_k - x_{j,k}\|\cr
&\leq\Big\|\bar{{x}}_k-{ [\bar{ x}_{k}]}-{\mu} \nabla f (\bar{x}_k)\Big\|
+\frac{\mu L}{\sqrt{n}}\|\bar{{\bf x}}_k-{\bf x}_k\|.\\
\end{split}
\end{align} 
For the first term of the last line, we use ${\nabla f}({ [\bar{ x}_{k}]})=0$ to compute
\begin{align*}
&\Big\|\bar{{x}}_k-{ [\bar{ x}_{k}]}-{\mu} \nabla f (\bar{x}_k)\Big\|^2\\
&\quad\textcolor{black}{=} \|\bar{{x}}_k-{ [\bar{x}_{k}]}\|^2
-2\mu\langle  \bar{{x}}_k-{ [\bar{x}_{k}]}, {\nabla f}(\bar{{x}}_k)\rangle
+\mu^2\|{\nabla f}(\bar{{x}}_k)\|^2\\
&\quad=\|\bar{{x}}_k-{ [\bar{x}_{k}]}\|^2
-2\mu\langle  \bar{{x}}_k-{ [\bar{x}_{k}]},\nabla f(\bar{{x}}_k)
-{\nabla f}({ [\bar{ x}_{k}]})\rangle
\\
&\quad\quad+\mu^2\|{\nabla f}(\bar{{x}}_k)-{\nabla f}({ [\bar{x}_{k}]})\|^2.
\end{align*}
Then we apply Lemma \ref{lem-4-1} and use $0 < \mu \leq \frac{2C_H}{\mathbf{L} + \alpha}$ to bound the last line by
\begin{align*}
\begin{split}
& \|\bar{{x}}_k-{ [\bar{x}_{k}]}\|^2
-2\mu\left(\frac{\mathbf{L}\alpha c_H}{\mathbf{L} + \alpha}\|\bar{x}_k -{ [\bar{x}_{k}]}\|^2 + \frac{C_H}{\mathbf{L}+ \alpha}\|{\nabla f}(\bar{{x}}_k) -{\nabla f}({ [\bar{x}_{k}]})\|^2\right)\\
&\quad+\mu^2\|{\nabla f}(\bar{{x}}_k)-{\nabla f}({ [\bar{x}_{k}]})\|^2\\
&\quad \leq
\left(1-2\mu\frac{{\mathbf{L}}\alpha c_H}{\mathbf{L}+ \alpha} \right)\|\bar{{x}}_k-{ [\bar{x}_{k}]}\|^2.
\end{split}
\end{align*}
Therefore, we have 
\begin{align}\label{we have from 3}
&\|\bar{{x}}_k-{ [\bar{x}_{k}]}-\mu{\nabla f}(\bar{{x}}_k)\|
\leq\sqrt{1-C_2\mu}\|\bar{{x}}_k-{ [\bar{x}_{k}]}\|,
\end{align}
where $C_2 = \frac{2{ \mathbf{L}}\alpha c_H}{\mathbf{L} + \alpha}$. 
Inserting \eqref{we have from 3} into \eqref{eq-4-5}, we get
\begin{align*}\label{we have 4}
\begin{split}
\|\bar{{x}}_{k+1}-{ [\bar{x}_{k}]}\|
\leq \sqrt{1-C_2\mu}\|\bar{{x}}_k-{[\bar{ x}_k]}\|+\frac{\mu L}{\sqrt{n}} \|{\bf x}_k-\bar{{\bf x}}_k\|.
\end{split}
\end{align*}
This implies
\begin{equation}\label{eq-4-20}
\|\bar{{x}}_{k+1}-{[\bar{ x}_{k+1}]}\|
\leq \sqrt{1-C_2\mu}\|\bar{{x}}_k-{ [\bar{ x}_k]}\|+\frac{\mu L}{\sqrt{n}} \|{\bf x}_k-\bar{{\bf x}}_k\|
\end{equation}
since $[x]$ is the projection of $x$ onto the minimizing set $X^*$.
On the other hand, we recall from \eqref{eq-3-32} that
\begin{equation}\label{eq-4-21}
\begin{split}
\|{\bf x}_{k+1}-\bar{{\bf x}}_{k+1} \|
 \leq {(1+\mu L)}\beta^{t(k)} 
\|{\bf x}_k-\bar{\bf x}_k\| +{\mu \sqrt{n} L}\beta^{t(k)}\|\bar{x}_k-[\bar{x}_k]\|
+{\mu D}\beta^{t(k)},
\end{split}
\end{equation}
where $D>0$ is now defined as in \eqref{eq-2-20}.
If we put
\begin{equation}\label{eq-4-50}
A_k=\sqrt{n}\|\bar{{ x}}_{k}-{   [\bar{ x}_k]}\|,\quad B_k=\|{\bf x}_{k}-\bar{{\bf x}}_{k} \|,
\end{equation}
 we get the following system of  difference inequalities:
\begin{align}\label{eq-4-1}
\left(\begin{array}{c} A_{k+1}\cr B_{k+1}\end{array}\right) \leq
\left(\begin{array}{cc} \sqrt{1-C_2\mu} & \mu L \cr {C_0}\beta^{t(k)} & {C_0}\beta^{t(k)}\end{array}\right)
\left(\begin{array}{c} A_{k}\cr B_{k}\end{array}\right)
 +
{}\beta^{t(k)}\left(\begin{array}{c} 0\cr \mu D\end{array}\right), 
\end{align}
where $C_0 = (1+\mu L)$.
We let $q = \sqrt{1-C_2\mu}$ and define
\begin{equation*}
Z_k = \begin{pmatrix} A_k \\ B_k \end{pmatrix},\quad Y_k={ }\beta^{t(k)}\left(\begin{array}{c} 0\cr \mu D\end{array}\right),\quad \textrm{and}\quad M_k = \left(\begin{array}{cc} \sqrt{1-C_2\mu} & \mu L\cr {C_0}\beta^{t(k)} & {C_0}\beta^{t(k)}\end{array}\right).
\end{equation*}
Noting that
\begin{equation*}
M_k = q \left(\begin{array}{cc} 1 & (\mu L)/q\cr ({C_0}/q)\beta^{t(k)} & ({C_0}/q)\beta^{t(k)}\end{array}\right) 
\end{equation*}
and applying Proposition \ref{prop-2-2}, we see that 
\begin{equation}\label{eq-4-10}
\|M_bM_{b-1}\dots M_a \|_1\leq {C}  \,q^{b-a+1}
\end{equation}
for all $1\leq a \leq b<\infty$. 
 We now expand \eqref{eq-4-1} as in \eqref{eq-2-10} and apply \eqref{eq-4-10} with the triangle inequality to deduce
\begin{equation*}
\begin{split}
\|Z_{T+1}\| & \leq \left\|\Big( M_T M_{T-1}\dots M_1 \Big) Z_0 + \sum_{j=0}^{T-1} \Big( M_T M_{T-1}\dots M_{j+1} \Big) Y_j + Y_T\right\|
\\
&\leq Cq^{T}\| Z_0\| + C\sum_{j=0}^{T-1} q^{T-j} \|Y_j\| + \|Y_T\|
\\
& \leq Cq^{T}\|Z_0\|+ C \sum_{j=0}^{T} q^{T-j} \beta^{t(j)}.
\end{split}
\end{equation*}
We estimate this further as
\begin{equation*}
\begin{split}
\|Z_{T+1}\|& \leq Cq^{T}\|Z_0\| + C\sum_{j=\lceil T/2\rceil}^{T} q^{T-j} \beta^{t(j)} + C \sum_{j=0}^{\lceil T/2\rceil -1} q^{T-j} \beta^{t(j)} 
\\
&\leq Cq^{T} \|Z_0\|+C\Big( \max_{\lceil T/2\rceil \leq j \leq T}\beta^{t(j)}\Big) \sum_{k=0}^{\infty} q^k + C \sum_{j=0}^{\lceil T/2\rceil -1} q^{T-j} 
\\
&\leq Cq^{T} \|Z_0\| +C\Big( \max_{ \lceil T/2 \rceil \leq j \leq T}\beta^{t(j)}\Big)\frac{1}{1-q} + \frac{Cq^{ T/2}}{1-q}  .
\end{split}
\end{equation*}
Since $\|Z_k \| = \sqrt{n\|\bar{x}_k - [\bar{x}_k]\|^2 + \|{\bf x}_k - \bar{\bf x}_k\|^2}$ for $k \in \mathbb{N}$, the above estimate gives the desired estimate. \textcolor{black}{The proof is finished.}

\section{\textcolor{black}{Convergence results for fixed steps of consensus}}\label{sec-6}

In this section, we provide another approach to analyze the sequential inequality \eqref{eq-4-1}. We first prove the uniform boundedness of the sequences from the inequality and then use it to analyze each of the two sequential inequalities in  \eqref{eq-4-1}. This approach results in the improved convergence estimates of \eqref{NEAR DGD} under weaker condition on $t(k)$ containing the case that $t(k)$ is constant.

We recall from \eqref{eq-4-50} the sequences defined by
\[
A_k=\sqrt{n}\|\bar{{ x}}_{k}-{   [\bar{ x}_k]}\|,\quad B_k=\|{\bf x}_{k}-\bar{{\bf x}}_{k} \|.
\]
In the following lemma, we show that the sequences $\{A_k\}_{k \geq 0}$ and $\{B_k\}_{k \geq 0}$ are uniformly bounded. 
\begin{lem}\label{lem-5-1} Fix  $J \in \mathbb{N}$. Assume that $t(k) \geq J$ for all $k \in \mathbb{N}\cup \{0\}$ and  
\begin{equation*}
\mu < \min \Big\{\frac{2C_H}{\mathbf{L}+\alpha},~ \frac{C_2}{C_2 +L (1+\sqrt{2}) }\cdot \frac{1-\beta^J}{L\beta^J}\Big\},
\end{equation*}
where $C_2 = \frac{2\mathbf{L}\alpha c_H}{\mathbf{L}+\alpha}$. 
We take $\gamma = \frac{1- \sqrt{1-C_2 \mu}}{\mu L}$ and 
\begin{equation}\label{eq-5-30}
R = \max \Big\{ A_0, ~ B_0 / \gamma,~ \frac{\mu D \beta^J}{\gamma-(\gamma +(1+\gamma)\mu L)\beta^J}\Big\}.
\end{equation}
Then we have
\begin{equation}\label{eq-5-20}
A_k \leq R, \quad B_k \leq \gamma R
\end{equation}
for all $k \in \mathbb{N} \cup \{0\}$.
\end{lem}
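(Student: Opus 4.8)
The plan is to prove \eqref{eq-5-20} by induction on $k$, reading off the two scalar inequalities contained in the system \eqref{eq-4-1}. Before the induction I would first verify that the step-size condition makes the constant $R$ of \eqref{eq-5-30} well defined and positive, i.e.\ that the denominator $\gamma - (\gamma + (1+\gamma)\mu L)\beta^J$ in its third entry is strictly positive. Writing $s = \sqrt{1-C_2\mu}$, the defining relation $\gamma = (1-s)/(\mu L)$ gives the two identities $\mu L \gamma = 1-s$ and $\gamma/(1+\gamma) = (1-s)/(\mu L + 1 - s)$, which I would use repeatedly. Positivity of the denominator is equivalent to $\tfrac{\gamma}{1+\gamma}\cdot\tfrac{1-\beta^J}{\beta^J} > \mu L$, and I would derive it from the elementary bound $1 - s = \tfrac{C_2\mu}{1+s} \geq \tfrac{C_2\mu}{2}$, valid since $0 \leq s < 1$ (which in turn follows from $C_2\mu \in (0,1]$, using $\mu \leq 2C_H/(\mathbf{L}+\alpha)$, the formula $C_2 = \tfrac{2\mathbf{L}\alpha c_H}{\mathbf{L}+\alpha}$, the bound $c_H C_H \leq 1$ from Remark \ref{rem-2-8}, and $(\mathbf{L}+\alpha)^2 \geq 4\mathbf{L}\alpha$). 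Since $t \mapsto t/(\mu L + t)$ is increasing, this bound yields $\tfrac{\gamma}{1+\gamma} \geq \tfrac{C_2}{C_2 + 2L}$, and because $1+\sqrt{2} > 2$ the hypothesis $\mu L < \tfrac{C_2}{C_2 + L(1+\sqrt{2})}\cdot\tfrac{1-\beta^J}{\beta^J}$ is exactly what is needed to close the chain $\tfrac{\mu L \beta^J}{1-\beta^J} < \tfrac{C_2}{C_2 + L(1+\sqrt{2})} < \tfrac{C_2}{C_2+2L} \leq \tfrac{\gamma}{1+\gamma}$.

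With $R>0$ in hand the base case $k=0$ is immediate, since $A_0 \leq R$ and $B_0 \leq \gamma R$ are nothing but the first two entries in the maximum defining $R$. For the inductive step I assume $A_k \leq R$ and $B_k \leq \gamma R$. For the first component I would use the first row of \eqref{eq-4-1}, namely $A_{k+1} \leq \sqrt{1-C_2\mu}\,A_k + \mu L B_k$, to get $A_{k+1} \leq sR + \mu L \gamma R = (s + (1-s))R = R$, where the middle step is precisely the identity $\mu L\gamma = 1-s$. This is the clean part: the value of $\gamma$ is rigged so that the first row reproduces $R$ exactly.

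For the second component I would work from the sharper estimate \eqref{eq-4-21}, namely $B_{k+1} \leq \mu L \beta^{t(k)} A_k + (1+\mu L)\beta^{t(k)} B_k + \mu D \beta^{t(k)}$; note this is tighter than the matrix $M_k$ in \eqref{eq-4-1}, whose $(2,1)$ entry $C_0\beta^{t(k)}$ over-estimates $\mu L \beta^{t(k)}$, and this very sharpening is what the denominator in \eqref{eq-5-30} records. Using $t(k) \geq J$ together with $\beta \in [0,1)$ to replace $\beta^{t(k)}$ by $\beta^J$, and inserting $A_k \leq R$, $B_k \leq \gamma R$, I would obtain $B_{k+1} \leq \beta^J R\big(\gamma + (1+\gamma)\mu L\big) + \mu D \beta^J$ after collecting $\mu L + (1+\mu L)\gamma = \gamma + (1+\gamma)\mu L$. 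The target $B_{k+1} \leq \gamma R$ is then equivalent to $\mu D \beta^J \leq R\big(\gamma - (\gamma + (1+\gamma)\mu L)\beta^J\big)$, which is exactly the inequality guaranteed by the third entry of the maximum in \eqref{eq-5-30} once the denominator is known to be positive. This closes the induction.

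The main obstacle is the preliminary positivity check: everything downstream is rigged bookkeeping, but one genuinely must confirm that the explicit step-size threshold forces the denominator of $R$ to be positive, and the only subtle point there is extracting a usable lower bound for $\gamma/(1+\gamma)$ (equivalently for $1 - \sqrt{1-C_2\mu}$) in terms of $C_2\mu$. The constant $1+\sqrt{2}$ in the hypothesis plays no structural role; it is merely a convenient number exceeding $2$ that leaves slack in this estimate.
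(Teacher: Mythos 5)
Your proof is correct and takes essentially the same route as the paper's: induction on $k$ using the sharpened recursions \eqref{eq-4-20}--\eqref{eq-4-21}, with the identity $\mu L \gamma = 1-\sqrt{1-C_2\mu}$ closing the $A$-component and the third entry of the maximum defining $R$ closing the $B$-component. Your preliminary positivity check on the denominator of $R$ (via $1-\sqrt{1-C_2\mu} \geq C_2\mu/2$ and monotonicity of $t \mapsto t/(\mu L + t)$) is exactly the content of the paper's remark following the lemma, where the same conclusion $\frac{\gamma}{1+\gamma} \geq \frac{C_2}{C_2+L(1+\sqrt{2})}$ is reached through the lower bound $\gamma > \frac{C_2}{L(1+\sqrt{2})}$.
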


\begin{rem}In the definition of $R$ of the above lemma, we need the condition $(\gamma+(1+ \gamma) \mu L ) \beta^J < \gamma$, which is written as
\begin{equation}\label{eq-5-13}
\mu < \frac{\gamma(1-\beta^J)}{(1+\gamma) L \beta^J}.
\end{equation} 
Noting that $\gamma = \frac{C_2}{L (1+\sqrt{1+C_2 \mu}) }$and $C_2 \mu <1$, we have
\begin{equation*}
 \gamma>\frac{C_2}{L(1+\sqrt{2}) }.
\end{equation*} 
It yields that $\frac{\gamma}{1+\gamma} \geq \frac{C_2}{C_2 + L(1 +\sqrt{2}) }$, and so the following condition
\begin{equation*}
\mu < \frac{C_2}{C_2 + L(1+\sqrt{2}) } \cdot \frac{1-\beta^J}{L\beta^J}
\end{equation*}
is a sufficient condition for \eqref{eq-5-13}.
\end{rem}
\begin{proof}[Proof of Lemma \ref{lem-5-1}]
We argue by an induction. Combining $t(k) \geq J$ with \eqref{eq-4-20} and \eqref{eq-4-21}, we have the following estimates
\begin{equation}\label{eq-5-1}
\begin{split}
A_{k+1} & \leq \sqrt{1-C_2 \mu} A_k + \mu L B_k
\\
B_{k+1} & \leq \mu L\beta^{J} A_k + (1+\mu L) \beta^{J} B_k + \mu D \beta^J.
\end{split}
\end{equation}
Note that \eqref{eq-5-20} holds for $k=0$ by the definition of $R$. Assume that it holds that
\begin{equation}\label{eq-5-2}
A_k \leq R,\quad B_k \leq \gamma R
\end{equation}
for a fixed value $k \in \mathbb{N}\cup \{0\}$. Combining this with the first estimate of \eqref{eq-5-1} yields
\begin{equation*}
A_{k+1} \leq (\sqrt{1-C_2 \mu} + \gamma \mu L) R = R.
\end{equation*}
We also use the second estimate of \eqref{eq-5-1} with \eqref{eq-5-2} to deduce
\begin{equation*}
\begin{split}
B_{k+1} &\leq \mu  L\beta^J R +   (1+\mu L) \beta^J \gamma R + \mu D \beta^J
\\
& =  (\mu L+ \gamma (1+\mu L)) \beta^J R + \mu D \beta^J
\\
& \leq \gamma R,
\end{split}
\end{equation*}
where the second inequality holds due to $R \geq  \frac{\mu  D \beta^J}{\gamma-  (\mu L+ \gamma (1+\mu L))\beta^J}$. This completes the inductive argument, and so the proof is completed.
\end{proof}
\begin{rem}
Notice that using the definition of $C_2$ along with the inequality $c_H \leq \|H\|_2^2$ (see Remark \ref{rem-2-8}) and $L \geq \alpha \|H\|_2^2$ of Lemma \ref{lem-2-9}, we have
\begin{equation}\label{eq-6-6}
\begin{split}
\gamma & = \frac{C_2}{L(1+\sqrt{1+C_2 \mu})   } 
\\
& \leq \frac{C_2}{2 {L}} = \frac{2\mathbf{L} \alpha c_H}{2L (\mathbf{L}+\alpha)  }
\\
& \leq \frac{\alpha \|H\|_2^2\mathbf{L}}{L(\mathbf{L}+\alpha)} 
\\
&\leq \frac{\mathbf{L}}{\mathbf{L}+\alpha}<1.
\end{split}
\end{equation}
\end{rem}
Using the uniform boundedness of the above lemma, we proceed to analyze the sequential estimates \eqref{eq-4-20} and \eqref{eq-4-21} further to derive a sharp bound on $A_k$ and $B_k$. For this we shall use an elementary lemma given in the below.
\begin{lem}\label{lem-5-2}
Let $\alpha \in (0,1)$ and $\beta >1$. Suppose that a sequence $\{v_k\}_{k=0}^{\infty}\subset\mathbb{R}$ satisfies 
\begin{equation}\label{eq-5-10}
v_{k+1} \leq \alpha v_k + \beta
\end{equation}
for all $k \in \mathbb{N} \cup \{0\}$. Then we have
\begin{equation*}
v_k \leq \alpha^k v_0 + \frac{\beta}{1-\alpha}.
\end{equation*}
\end{lem}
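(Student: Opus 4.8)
The plan is to prove the bound by induction on $k$, exploiting the fact that the constant $\frac{\beta}{1-\alpha}$ is precisely the fixed point of the affine map $v \mapsto \alpha v + \beta$ that governs the recursion \eqref{eq-5-10}. This is the conceptual heart of the statement: because the candidate bound $\alpha^k v_0 + \frac{\beta}{1-\alpha}$ is stable under one application of the recursion, the induction closes exactly and the additive constant absorbs cleanly instead of accumulating with $k$.

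First I would dispose of the base case $k=0$, where the asserted inequality reads $v_0 \leq v_0 + \frac{\beta}{1-\alpha}$; this holds since $\alpha \in (0,1)$ and $\beta>0$ force $\frac{\beta}{1-\alpha}>0$. For the inductive step, assuming $v_k \leq \alpha^k v_0 + \frac{\beta}{1-\alpha}$, I would apply the hypothesis \eqref{eq-5-10} followed by the induction hypothesis to obtain
\[
v_{k+1} \leq \alpha v_k + \beta \leq \alpha^{k+1} v_0 + \frac{\alpha\beta}{1-\alpha} + \beta.
\]
The step then reduces to the elementary identity $\frac{\alpha\beta}{1-\alpha} + \beta = \frac{\alpha\beta + \beta(1-\alpha)}{1-\alpha} = \frac{\beta}{1-\alpha}$, which recovers the claimed bound at level $k+1$ and completes the induction.

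An equivalent route is to unroll the recursion directly: iterating \eqref{eq-5-10} yields $v_k \leq \alpha^k v_0 + \beta \sum_{j=0}^{k-1}\alpha^j = \alpha^k v_0 + \beta\,\frac{1-\alpha^k}{1-\alpha}$, and then bounding $1-\alpha^k \leq 1$ (valid since $\alpha^k \geq 0$) gives $\alpha^k v_0 + \frac{\beta}{1-\alpha}$. Both approaches are entirely elementary, and there is no genuine obstacle here, as this is a standard contraction-iteration estimate of Gr\"onwall type. The only point that deserves a moment of care is the recognition that $\frac{\beta}{1-\alpha}$ is the stationary value of the iteration, which is exactly what makes the constant term remain uniformly bounded in $k$.
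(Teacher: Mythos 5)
Your proposal is correct, and its second route---unrolling the recursion to get $v_k \leq \alpha^k v_0 + \beta\sum_{j=0}^{k-1}\alpha^j$ and bounding the geometric sum by $\frac{\beta}{1-\alpha}$---is exactly the paper's own proof. The inductive version built on the fixed point $\frac{\beta}{1-\alpha}$ of the map $v \mapsto \alpha v + \beta$ is a trivially equivalent repackaging of the same estimate, so the two approaches are essentially identical.
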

\begin{proof}
Using \eqref{eq-5-10} inductively, we find
\begin{equation*}
v_k \leq \alpha^k v_0 + \beta\sum_{j=0}^{k-1} \alpha^j < \alpha^k v_0 + \frac{\beta}{1-\alpha}.
\end{equation*}
The proof is done.
\end{proof}
We derive a convergence for \eqref{NEAR DGD} when the number of the consensus steps $t(k)$ is fixed in the following result, which corresponds to Theorem \ref{thm-1-10}.
\begin{proof}[Proof of Theorem  \ref{thm-1-10}]
Using the bound of Lemma \ref{lem-5-1} with \eqref{eq-5-1} and \eqref{eq-6-6}, we find
\begin{equation*}
B_{k+1}   \leq    \beta^{J} B_k + \mu \beta^J ( 2L   R  + D).
\end{equation*} 
By applying the above lemma, we deduce that
\begin{equation*}
B_k \leq ( \beta^J)^k B_0 + \frac{\mu \beta^J ( 2LR + D)}{1- \beta^J}.
\end{equation*}
Inserting this estimate into the first inequality of \eqref{eq-5-1} yields
\begin{equation*}
A_{k+1} \leq \sqrt{1-C_2 \mu} A_k + \mu^2 L \beta^J \frac{(2 L R + D)}{1- \beta^J} + \mu L( \beta^J)^k B_0.
\end{equation*}
We consider two sequences $v_k$ and $w_k$ such that
\begin{equation}\label{eq-5-11}
v_{k+1} = \sqrt{1-C_2 \mu} \, v_k +  \mu^2 L \beta^J \frac{( 2L R + D)}{1- \beta^J}
\end{equation}
for $k \geq 1$ with $v_1 = A_1$, and
\begin{equation}\label{eq-5-12}
w_{k+1} = \sqrt{1-C_2 \mu} \, w_k + \mu L ( \beta^J)^k B_0
\end{equation}
for $k \geq 1$ and $w_1 =0$. Then it is easy to see that $A_k \leq v_k + w_k$ for all $k \geq 1$.  
Applying Lemma \ref{lem-5-2} to \eqref{eq-5-11} yields 
\begin{equation*}
v_{k} \leq (\sqrt{1-C_2 \mu})^{k-1} A_1 + \frac{\mu^2}{1-\sqrt{1-C_2 \mu}} \frac{L\beta^J (2 L R +  D)}{1-  \beta^J}.
\end{equation*}
Also, noting that \eqref{eq-5-12} implies that $w_{k+1} \leq w_k +\mu L ( \beta^J)^k B_0$, we deduce that
\begin{equation*}
w_k \leq \sum_{l=1}^{k-1} \mu L ( \beta^J)^l B_0 \leq \frac{\mu L B_0}{1-   \beta^J}  \beta^J.
\end{equation*}
Combining the above estimates with the fact that $A_k \leq v_k + w_k$, we get
\begin{equation*}
A_k \leq (\sqrt{1-C_2 \mu})^{k-1} A_1 + \frac{\mu^2}{1-\sqrt{1-C_2 \mu}} \frac{L\beta^J ( 2LR +  D)}{1-  \beta^J} +  \frac{\mu LB_0}{1-   \beta^J}  \beta^J.
\end{equation*}
Using that $A_1 \leq A_0 + \mu LB_0$ from \eqref{eq-5-1}, we obtain
\begin{equation}\label{eq-5-24} 
A_{k}  \leq (\sqrt{1-C_2 \mu})^{k-1} (A_0 + \mu LB_0)   +\frac{\mu L \beta^J}{1-\beta^J}\Big( \frac{\mu  ( 2LR +  D)}{1-\sqrt{1-C_2 \mu}} +  { B_0}\Big)
\end{equation}
The proof is done.
\end{proof}
\begin{rem}
We notice that \eqref{eq-5-24} gives a bound of the form
\begin{equation*}
A_k \leq (\sqrt{1-C_2 \mu})^{k-1}(A_0 + \mu L B_0) + O \Big( \frac{\mu L \beta^J}{1-\beta^J}\Big)
\end{equation*}
using that
\begin{equation*}
\frac{\mu^2}{1 - \sqrt{1-C_2 \mu}}  = \frac{\mu (1 + \sqrt{1-C_2 \mu})}{C_2} \leq \frac{2\mu}{C_2}.
\end{equation*}
\end{rem}
We utilize the above argument to handle general step size $\{\beta (k)\}$ to get the following result, which corresponds to Theorem \ref{thm-1-11}.
\begin{proof}[Proof of Theorem \ref{thm-1-11}]
Using \eqref{eq-4-21}, we have
\begin{equation}\label{eq-5-23}
\begin{split}
B_{k+1} &\leq \mu L \beta^{t(k)} A_k + (1+\mu L)\beta^{t(k)} B_k + \mu D \beta^{t(k)}
\\
&= \beta^{t(k)} B_k +\mu \beta^{t(k)} (LA_k +LB_k +D)
\\
&\leq \beta^{t(k)} B_k + \mu (2LR +D) \beta^{t(k)},
\end{split}
\end{equation}
where we used Lemma \ref{lem-5-1} and \eqref{eq-6-6} in the second inequality.
Since $t(k) \geq J$, we have
\begin{equation*} 
B_{k+1}  \leq \beta^{J} B_k + \mu (2LR +D) \beta^{J}. 
\end{equation*}
By applying Lemma \ref{lem-5-2} to the above inequality, we get
\begin{equation*}
B_k \leq \beta^{kJ} B_0 + \frac{\mu (2LR+D) \beta^J}{1-\beta^J}.
\end{equation*}
Inserting this estimate into \eqref{eq-5-23}, we get
\begin{equation*}
\begin{split}
B_{k+1}& \leq \beta^{kJ+t(k)} B_0 + \mu (2LR+D) \beta^{t(k)} \Big( \frac{\beta^J}{1-\beta^J} + 1\Big)
\\
&= \beta^{(k+1)J} B_0 + \frac{\mu (2LR +D)\beta^{t(k)}}{1-\beta^{J}}.
\end{split}
\end{equation*}
Combining this with the first inequality of \eqref{eq-5-1} yields 
\begin{equation*}
A_{k+1} \leq \sqrt{1-C_2 \mu} A_k + \frac{\mu^2 L(2LR +D) \beta^{t(k-1)}}{{1-\beta^{J}}} + \mu L B_0 \beta^{kJ}
\end{equation*}
for $k \geq 1$.
Using this iteratively gives
\begin{equation}\label{eq-6-22}
\begin{split}
A_{k+1} &\leq (\sqrt{1-C_2 \mu})^{k} A_1 +\frac{\mu^2 L (2LR+D)}{1-\beta^{J}} \sum_{l=0}^{k-1} (\sqrt{1-C_2 \mu})^{k-1-l} \beta^{t(l)}
\\
&\quad  + \mu L B_0 \sum_{l=0}^{k-1} (\sqrt{1-C_2 \mu})^{k-1-l} \beta^{(l+1)J}.
\end{split}
\end{equation}
Splitting the summation, we estimate 
\begin{equation*}
\begin{split}
&\sum_{l=0}^{k-1} (\sqrt{1-C_2 \mu})^{k-1-l} \beta^{t(l)}
\\
 & = \sum_{l=0}^{\lfloor k/2\rfloor-1} (\sqrt{1-C_2 \mu})^{k-1-l} \beta^{t(l)} + \sum_{l=\lfloor k/2\rfloor}^{k-1} (\sqrt{1-C_2 \mu})^{k-1-l} \beta^{t(l)}
 \\
&\leq (\sqrt{1-C_2 \mu})^{\lfloor k/2\rfloor} \sum_{l=0}^{\lfloor k/2\rfloor-1} \beta^{t(l)}  + \frac{\beta^{t(\lfloor k/2\rfloor)}}{1- \sqrt{1-C_2 \mu}}.
\end{split}
\end{equation*}
Similarly,
\begin{equation*}
\begin{split}
&\sum_{l=0}^{k-1} (\sqrt{1-C_2 \mu})^{k-1-l} \beta^{(l+1)J}
\\
& = \sum_{l=0}^{\lfloor k/2\rfloor-1} (\sqrt{1-C_2 \mu})^{k-1-l} \beta^{(l+1)J} + \sum_{l=\lfloor k/2\rfloor}^{k-1}(\sqrt{1-C_2 \mu})^{k-1-l} \beta^{(l+1)J}
\\
&\leq (\sqrt{1-C_2 \mu})^{\lfloor k/2\rfloor} \sum_{l=0}^{\lfloor k/2\rfloor-1} \beta^{(l+1)J}  + \frac{\beta^{(\lfloor k/2\rfloor+1)J}}{1-\sqrt{1-C_2 \mu}}.
\end{split}
\end{equation*}
Combining the above two estimates in \eqref{eq-6-22} we have
\begin{equation*}
\begin{split}
A_{k+1}& \leq (\sqrt{1-C_2 \mu})^{k} A_1 
\\
&\quad   +\frac{\mu^2 L (2LR+D)}{1-\beta^{J}} \bigg( (\sqrt{1-C_2 \mu})^{\lfloor k/2\rfloor} \sum_{l=0}^{\lfloor k/2\rfloor-1} \beta^{t(l)} + \frac{\beta^{t(\lfloor k/2\rfloor)}}{1- \sqrt{1-C_2 \mu}}\bigg)
\\
&\quad + \mu L B_0 \bigg((\sqrt{1-C_2 \mu})^{\lfloor k/2\rfloor} \sum_{l=0}^{\lfloor k/2\rfloor-1} \beta^{(l+1)J}  + \frac{\beta^{(\lfloor k/2\rfloor+1)J}}{1-\sqrt{1-C_2 \mu}}\bigg).
\end{split}
\end{equation*}
Using that $A_1 \leq A_0 + \mu LB_0$ from \eqref{eq-5-1}, we obtain
\begin{equation*}
\begin{split}
A_{k+1}& \leq (\sqrt{1-C_2 \mu})^{k} (A_0 + \mu L B_0) 
\\
&\quad   +\frac{\mu L}{1 - \sqrt{1- C_2\mu}} \bigg[ \mu (2LR +D) \beta^{t(\lfloor k/2 \rfloor)} + B_0 \beta^{(\lfloor k/2\rfloor +1)J}\bigg]
\\
&\quad + \mu L(\sqrt{1-C_2 \mu})^{\lfloor k/2 \rfloor} \bigg[ \mu (2LR+D) \sum_{l=0}^{\lfloor k/2 \rfloor -1} \beta^{t(l)} + B_0 \sum_{l=0}^{\lfloor k/2\rfloor -1} \beta^{(l+1)J}\bigg].
\end{split}
\end{equation*}
The proof is done.
\end{proof}


\section{Numerical Results}\label{sec-7}
In this section, we present numerical results which validate the convergence results obtained in this paper. 

\subsection{Regression Problem} We perform the numerical test for the regression problem 
\begin{equation*}
f(x)=\sum_{i=1}^n \| h_i^\top  x  -y_i \|^2 = \|H x-y\|^2,
\end{equation*}
where $x  \in \mathbb{R}^p$, $h_i \in \mathbb{R}^{p\times s}$, and $y_i \in \mathbb{R}^s$ and 
\begin{equation*}
H = (h_1, \cdots h_n )^\top\quad \textrm{and}\quad y= (y_1^\top, \cdots, y_n^\top)^\top.
\end{equation*}
This function $f$ is quasi-strongly convex, and it is strongly convex if and only if $\textrm{ker}(H) =\{0\}$. We recall that $\textrm{ker}(H)=\{0\}$ is equivalent to $H^\top H$ being invertible. Thus we may determine whether $f$ is strongly convex or not, by computing $\det (H^\top H)$. 

We present the simulation for the case when $f$ is quasi-strongly convex but not strongly convex ($\textrm{ker}(H) \neq \{0\}$). In this case we let $X^*$ be the set of minimizers of $f$, and for $x \in \mathbb{R}^p$ we denote by $[x]$ the projection of $x$ onto the set $X^*$. We measure $\|\bar{x}_k - [\bar{x}_k]\|$ and plot this quantity in terms of the following measures (see Figure 1 below)
\begin{enumerate}
\item Number of iterations;
\item Cumulative cost of communication and computation, i.e., sum of the costs corresponding to the iteration numbers $0,1,2, \dots, k$.\end{enumerate}
Here the cost of communication and computation is defined by 
\begin{center}
\textrm{Cost = $\#$ Communications $\times c_c + \#$ Computations $\times c_g$.}
\end{center}
The above cost introduced in \cite{BBKW - Near DGD} concerns both the cost of communications and that of gradient computations with suitable weight $c_c >0$ and $c_g >0$ reflecting the background of the problem.

\begin{figure}[htbp]
\includegraphics[height=4cm, width=7cm]{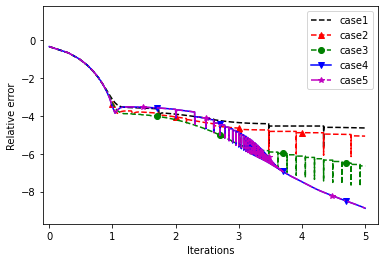}  \includegraphics[height=4cm, width=7cm]{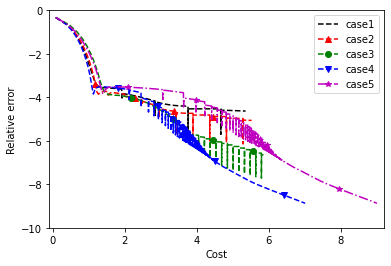} 
\caption{Performance of NEAR-DGD$^{+}$ for the regression problem with respect to: (1) Log$_{10}$ of Number of iterations, (2) Log$_{10}$ of Cumulative cost of communication and computation.}
\label{fig1}
\vspace{-0.3cm}
\end{figure}
We set  the dimension $p=50$, the number of agents $n=8$, and $s=7$. For each \textcolor{black}{$1\leq i \leq n$}, we set the $p \times s$ matrix $h_i$ by sampling each entry according to the uniform distribution on the interval $[0,1]$. Note that the $ns\times p$ matrix $H$ must have a nonzero kernel since $ns=56>50=p$. Then, we set $y_i = h_i x_i$ with a vector $x_i \in \mathbb{R}^p$. \textcolor{black}{Each element of the initial data $x_{i,0}$ is sampled accoring to the uniform distribution on $[0,1]$.}  
Given the initial data, we perform the algorithm \eqref{NEAR DGD} with the following choices: 
\medskip
\begin{enumerate}
\item[Case 1]: $t(k) = \lfloor0.5 * \log (k+1)\rfloor+1$. 
\item[Case 2]: $t(k) = \lfloor  \log (k+1)\rfloor+1$. 
\item[Case 3]: $t(k) = \lfloor 3* \log (k+1)\rfloor +1$.
\item[Case 4]: $t(k) = \lfloor k/100\rfloor+1$.
\item[Case 5]: $t(k) = k+1$,
\end{enumerate}
where  {$\lfloor a\rfloor$ denotes the greatest integer no larger than $a$.}
The left plot in Figure \ref{fig1} represents the graph of $\log_{10} (\|\bar{x}_k - [\bar{x}_k]\|/\|\bar{x}_0 - [\bar{x}_0]\|) $ with respect to the number of iterations in the logarithmic scale. The right plot in Figure \ref{fig1} is the graph of $\log_{10} (\|\bar{x}_k - [\bar{x}_k]\|/\|\bar{x}_0 - [\bar{x}_0]\|)$ with respect to the cost in the logarithmic scale with $c_c =0.2$ and $c_g =1$.
The learning rate is chosen to be $\mu =0.1$.  The graph $\mathcal{G}$ is constructed as a 3-regular
graph topology where each node is connected to its 3 immediate
neighbors.   In the graph `Iteration-Error' we observe that the convergence rate is fastest for Case 4 and Case 5. On the other hand, in the graph `Cost-Error' we see that Case 3 and Case 4 are most efficient. 
\textcolor{black}{
The second largest singular value $\beta$ of the connectivity matrix $W$ for the network of agents is computed as $\beta \simeq 0.577$. For Cases $1$--$3$, the dominating term  in the right hand side of the convergence estimate \eqref{eq-5-40} of Theorem \ref{thm-1-11} is 
\begin{equation}\label{eq-6-2}
\beta^{t(\lfloor k/2\rfloor)} \simeq \beta^{q \log (k/2+1)} = (k/2+1)^{q \log \beta} = (k/2+1)^{-q |\log \beta|},
\end{equation}
where $q\in \{0.5, 1, 3\}$. The log of these bounds are observed in the first graph of \mbox{Figure 1}. For Cases 4--5, the dominating term in the right hand side of \eqref{eq-5-40} is 
\begin{equation}
 (\sqrt{1-C_2 \mu})^{\lfloor k/2\rfloor} \sum_{l=0}^{\lfloor k/2\rfloor-1} \beta^{t(l)} \simeq  (\sqrt{1-C_2 \mu})^{\lfloor k/2\rfloor}.  
 \end{equation}
This bound is also observed in the graph $(1)$ of Figure 1. In fact, for these cases, the graph exhibits two phases of exponential decays. This is related with the bound \eqref{eq-5-40} involving two exponentially decaying terms 
\begin{equation}\label{eq-6-1}
\sigma_1 (\sqrt{1-C_2 \mu})^{\lfloor k/2\rfloor} + \sigma_2 \beta^{t(\lfloor k/2\rfloor)},
\end{equation}
where $\sigma_1$ and $\sigma_2$ are suitable positive constants. If $(\sqrt{1-C_2 \mu})> \beta \simeq 0.577$ and $\sigma_1 \ll \sigma_2$, then the graph of the log value of  \eqref{eq-6-1}  with increasing $k \geq 1$ involves two phases: the decay induced by $\sigma_2 \beta^{t(\lfloor k/2\rfloor)}$ for $1\leq k \leq k_0$ for some value $k_0 \in \mathbb{N}$,  and the decay induced by $\sigma_1 (\sqrt{1-C_2 \mu})^{\lfloor k/2\rfloor}$ for $k >k_0$. }

\textcolor{black}{We also perform the above numerical test with different choices of $c_c$ and $c_g$ for the Cost. We test the following two values
\begin{equation}
(c_g, c_c) = \{(1,0.02),  \, (0.02, 1) \}.
\end{equation}
The result shows that Cases 3--4 exhibit robust efficiency for any choices of $(c_g, c_c)$ while the order of efficiency of Cases  1 and 5 depend on the choices of $(c_g,c_c)$.}
\begin{figure}[htbp]
\includegraphics[height=4cm, width=7cm]{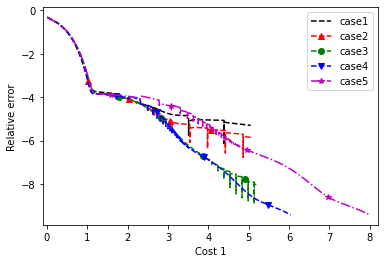}  \includegraphics[height=4cm, width=7cm]{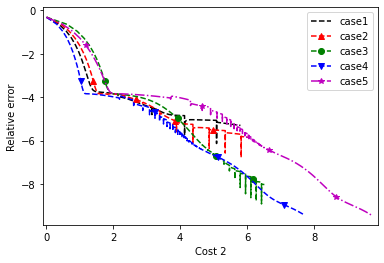} 
\caption{Performance of NEAR-DGD$^{+}$ for the regression problem with respect to different costs. Cost 1 (resp., Cost 2) denotes the Log$_{10}$ of cumulative cost of communication and computation with $(c_g, c_c)$ given by $(1, 0.02)$ (resp., $(0.02,1)$).}
\label{fig1}
\vspace{-0.3cm}
\end{figure}
\medskip


\subsection{Convex Problem}

We consider the convex (but not quasi-strongly convex) problem posed in \cite{QL} given by
\begin{equation}\label{eq-6-3}
f(x) = \frac{1}{n}\sum_{i=1}^n f_i (x) = \frac{1}{n} \sum_{i=1}^n (u(x) + b_i x), 
\end{equation}
where $b_i$ is randomly chosen to satisfy $\sum_{i} b_i =0$, and $u(x) = \frac{1}{4} x^4$ for $|x| \leq 1$, and $u(x) = |x| - \frac{3}{4}$ for $|x| >1$. \textcolor{black}{Again, we consider the same  seven cases of $t(k)$ as in the previous simulation.}
The left and the right plots of Figure \ref{fig2} measure  the summation of the agents' regret 
\begin{equation*}
\log_{10}\Big(\frac{1}{n}\sum_{i=1}^n \Big(f (x_i) -f (0)\Big)\Big),
\end{equation*}
 where $0$ is the minimizer of the function $f$, with respect to the number of iterations and the cost with $c_c =0.2$ and $c_g =1$, respectively. The learning rate is chosen to be $\mu =0.5$. The graph $\mathcal{G}$ is  a 3-regular topology. In the graph `Iterations-Regret' we see that \mbox{Cases 2--5} are fastest in convergence. The function cost decreases consistently also for Case 1. In the graph `Cost-Regret' it is observed that Case 2 is most efficient, and the performance of Case 3 is better than Cases 4--5. From \eqref{eq-6-2}, we see that the condition $\sum_{k=1}^{\infty} \beta^{t(k)} < \infty$ is satisfied for Cases 3--5 and not satisfied for Cases 1--2. However the numerical result shows that the Case 2 exhibits a similar decay to Case 3. This might suggest   that the bound obtained in Theorem \ref{thm-1-2} could be improved for the convex cost function \eqref{eq-6-3} using its property $f(x) \sim |x|^4$ near the zero.
\begin{figure}[htbp]
\includegraphics[height=4cm, width=7cm]{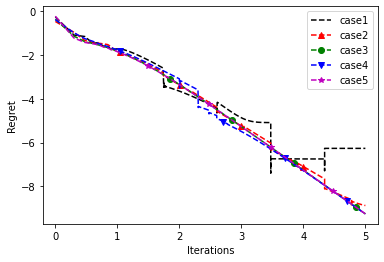}  \includegraphics[height=4cm, width=7cm]{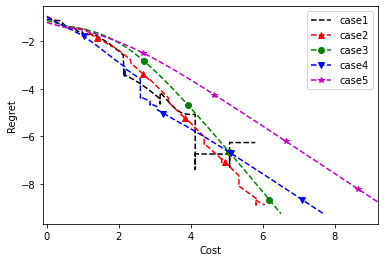}\caption{Performance of NEAR-DGD$^{+}$ for the convex problem with respect to: (1) Log$_{10}$ of Number of iterations, (2) Log$_{10}$ of Cumulative cost of communication and computation.}
\label{fig2}
\vspace{-0.3cm}
\end{figure}

\section{Conclusion}
In this paper, we studied the convergence property of the NEAR-DGD$^+$ (Nested Exact Alternating Recursion Distributed Gradient Descent) method  when the strong convexity assumption is missing. First, we obtained the $O(1/T)$ convergence result when the cost function is convex and smooth. Secondly, we established the linear convergence for a class of  quasi-strongly convex functions which are given as  composition of a degenerate linear mapping and a strongly convex function. We also provided the numerical results supporting the theoretical convergence results. Extending our result to general quasi-strongly convex functions will be an interesting future work. 





\section*{Acknowledgments}
We are grateful to the reviewers for their various comments and suggestions which improved the manuscript. The work of W. Choi was supported by the National Research Foundation of Korea NRF-
2016R1A5A1008055. The work of Seok-Bae Yun was supported by Samsung Science and Technology Foundation under Project Number SSTF-BA1801-02.

\end{document}